\documentclass[leqno,11pt]{article}

\usepackage[utf8]{inputenc}
\usepackage[T1]{fontenc}
\usepackage{microtype}
\usepackage[a4paper]{geometry}
%\usepackage[tmargin=0.75in,bmargin=0.75in,lmargin=1in,rmargin=1in]{geometry}
% PAPER GEOMETRY
\ifdefined\screenview
  \edef\mtht{\the\textheight}
  \edef\mtwd{\the\textwidth}
  \geometry{
    paperwidth=\dimexpr\mtwd+2.5em\relax,
    paperheight=\dimexpr\mtht+2.5em\relax,
    text={\mtwd,\mtht}
  }
\fi

\usepackage[dvipsnames]{xcolor}

% MATH STUFF
\usepackage{amsmath}
\usepackage{amsthm}
\usepackage{tikz-cd}
\usetikzlibrary{arrows} 
\tikzset{
  commutative diagrams/.cd, 
  arrow style=tikz, 
  diagrams={>=stealth}
}
\usetikzlibrary{matrix,decorations.pathreplacing,calc}

\usepackage{textcomp}
\usepackage[sb]{libertine}
\usepackage[varqu,varl]{zi4}%
\usepackage[libertine,bigdelims,vvarbb]{newtxmath}
\useosf
% BIBLIOGRAPHY
\usepackage{csquotes}
\usepackage[
  backend=biber,
  hyperref=true,
  backref=true,
  isbn=false,
  doi=true,
  natbib=true,
  eprint=true,
  useprefix=true,
  maxcitenames=99,
  maxbibnames=99,  
  maxalphanames=99, 
  minalphanames=99,
  safeinputenc,
  style=alphabetic,
  citestyle=alphabetic,
  block=space,
  datamodel=preamble/ext-eprint,
  sorting=nyt
]{biblatex}
\usepackage[
  bookmarksnumbered = true,
  hypertexnames = false,
  linktocpage = true,
  colorlinks    = true,
  citecolor     = Green,
  linkcolor     = Blue,
  urlcolor      = Blue,
  breaklinks
]{hyperref}

\DeclareFieldFormat{url}{%
  \href{#1}{\ComputerMouse}
}
\DeclareFieldFormat{doi}{%
  \mkbibacro{DOI}\addcolon\space\href{https://doi.org/#1}{#1}
}
\makeatletter
\DeclareFieldFormat{arxiv}{%
  arXiv\addcolon\space\href{http://arxiv.org/\abx@arxivpath/#1}{#1}
}
\makeatother
\DeclareFieldFormat{mr}{%
  MR\addcolon\space\href{http://www.ams.org/mathscinet-getitem?mr=MR#1}{#1}
}
\DeclareFieldFormat{zbl}{%
  Zbl\addcolon\space\href{http://zbmath.org/?q=an:#1}{#1}
}
\renewbibmacro*{eprint}{%
  \printfield{arxiv}%
  \newunit\newblock
  \printfield{mr}%
  \newunit\newblock
  \printfield{zbl}%
  \newunit\newblock
  \iffieldundef{eprinttype}
  {\printfield{eprint}}
  {\printfield[eprint:\strfield{eprinttype}]{eprint}}
}

\AtEveryBibitem{%
  \clearlist{address}%
}
\DeclareFieldFormat[article,inproceedings,inbook,incollection,thesis]{title}{\textit{#1}}
\renewbibmacro{in:}{}
\addbibresource{preamble/refs.bib}

\usepackage[scr=boondox,cal=euler]{mathalfa}

\usepackage{marvosym}

\usepackage{slashed}
\usepackage{esint} % SLASHED INTEGRAL
\usepackage[english]{babel}

% INDEX
\usepackage{imakeidx}
\indexsetup{noclearpage}
\makeindex[intoc]

% ALLOW FOR INLINE ENUMERATE, RESUME, ...
\usepackage[inline,shortlabels]{enumitem}

% ALLOW FOR SUBFIGURES
\usepackage{subcaption}

% ISO DATES
\usepackage[yyyymmdd]{datetime}

% NO INDENT IN ABSTRACT
\usepackage{etoolbox}
\ifundef{\abstract}{}{\patchcmd{\abstract}%
    {\quotation}{\quotation\noindent\ignorespaces}{}{}}

\usepackage[super]{nth}

%%%%%%%%%%%%%%%%%%%%%%%%%%%%%%%%%%%%%%%%%%%%%%%%%%%%%%%%%%%%

\usepackage{thmtools}

\numberwithin{equation}{section}
%\numberwithin{equation}{chapter}
\renewcommand{\qedsymbol}{$\blacksquare$}

\newcommand{\CorollaryQED}{\qedsymbol}

\newcommand{\ConjectureQED}{$\square$}
\newcommand{\SituationQED}{$\times$}
\newcommand{\DefinitionQED}{$\spadesuit$}
\newcommand{\NotationQED}{$\blacktriangleright$}
\newcommand{\ExampleQED}{$\bullet$}
\newcommand{\RemarkQED}{$\clubsuit$}

\declaretheorem[numberlike=equation,]{theorem}
\declaretheorem[numbered=no,name=Theorem]{theorem*}
\declaretheorem[numberlike=equation,name=Lemma]{lemma}
\declaretheorem[numberlike=equation,name=Proposition]{prop}
\declaretheorem[numberlike=equation,name=Corollary,qed=\CorollaryQED]{cor}

\declaretheorem[numberlike=equation,name=Definition,style=definition,qed=\DefinitionQED]{definition}
\declaretheorem[numbered=no,name=Definition,style=definition,qed=\DefinitionQED]{definition*}

\declaretheorem[numberlike=equation,style=definition,qed=\ExampleQED]{example}

\declaretheorem[numberlike=equation,style=remark,qed=\RemarkQED]{remark}
\declaretheorem[numbered=no,style=remark,name=Remark,qed=\RemarkQED]{remark*}

\def\makeautorefname#1#2{\AtBeginDocument{\expandafter\def\csname#1autorefname\endcsname{#2}}}
\makeautorefname{table}{Table}        
\makeautorefname{chapter}{Chapter}
\makeautorefname{section}{Section}
\makeautorefname{subsection}{Section}
\makeautorefname{subsubsection}{Section}
\makeautorefname{footnote}{Footnote}
\AtBeginDocument{\def\itemautorefname~#1\null{(#1)\null}}
\AtBeginDocument{\def\equationautorefname~#1\null{(#1)\null}}

\numberwithin{substep}{step}
\makeautorefname{step}{Step}
\makeautorefname{substep}{Step}

\makeautorefname{case}{Case}
\makeautorefname{substep}{Step}

\setlist[description]{leftmargin=!,labelindent=1em}
\setlist[enumerate]{label={\rm (\arabic*)},ref=\arabic*}
\setlist[enumerate,2]{label={\rm (\alph*)},ref=\theenumi.\alph*}
\setlist[enumerate,3]{label={\rm (\roman*)},ref=\theenumii.\roman*}
%%% Local Variables:
%%% mode: latex
%%% TeX-master: "letters"
%%% End:

\let\C\undefined
\let\U\undefined

\usepackage{bm}
\usepackage{mathtools} % FOR PAIREDDELIMITERS
\usepackage{stmaryrd} % FOR SUPER LIE BRACKET

%% code from mathabx.sty and mathabx.dcl
\DeclareFontFamily{U}{mathx}{\hyphenchar\font45}
\DeclareFontShape{U}{mathx}{m}{n}{
     <5> <6> <7> <8> <9> <10>
      <10.95> <12> <14.4> <17.28> <20.74> <24.88>
      mathx10
     }{}
\DeclareSymbolFont{mathx}{U}{mathx}{m}{n}
\DeclareFontSubstitution{U}{mathx}{m}{n}
\DeclareMathAccent{\widecheck}{0}{mathx}{"71}
\DeclareMathAccent{\wideparen}{0}{mathx}{"75}

\DeclareMathOperator{\Diff}{Diff}
\DeclareMathOperator{\End}{End}

\DeclareMathOperator{\HF}{\HF}

\DeclareMathOperator{\sym}{Sym}

\DeclareMathOperator{\ad}{ad}

\DeclareMathOperator{\coker}{coker}

\DeclareMathOperator{\im}{im}

\DeclareMathOperator{\tr}{tr}

\DeclarePairedDelimiter{\norm}{\|}{\|}

\DeclarePairedDelimiterX{\inp}[2]{\langle}{\rangle}{#1, #2}

\DeclarePairedDelimiter{\abs}{\lvert}{\rvert}

\def\({\left(}
\def\){\right)}
\def\<{\left\langle}
\def\>{\right\rangle}

\newcommand{\C}{{\mathbf{C}}}

\newcommand{\PU}{{\P\U}}

\newcommand{\R}{\mathbf{R}}

\newcommand{\SO}{\mathrm{SO}}
\newcommand{\SU}{\mathrm{SU}}

\newcommand{\Spin}{\mathrm{Spin}}
\newcommand{\Sp}{\mathrm{Sp}}

\newcommand{\U}{\mathrm{U}}

\newcommand{\Z}{\mathbf{Z}}

\newcommand{\iso}{\cong}

\newcommand{\ob}{\mathrm{ob}}

\renewcommand{\epsilon}{\varepsilon}

\newcommand{\vol}{\mathrm{vol}}

\renewcommand{\H}{\mathbf{H}}
\renewcommand{\Im}{\operatorname{Im}}

\renewcommand{\P}{\mathbf{P}}

\renewcommand{\det}{\operatorname{det}}

\renewcommand{\leq}{\leqslant}
\renewcommand{\geq}{\geqslant}

\newcommand{\w}{\wedge}
\newcommand{\tn}{\otimes}

\makeatletter
\renewcommand*\env@matrix[1][*\c@MaxMatrixCols c]{%
  \hskip -\arraycolsep
  \let\@ifnextchar\new@ifnextchar
  \array{#1}}

\renewcommand\xleftrightarrow[2][]{%
  \ext@arrow 9999{\longleftrightarrowfill@}{#1}{#2}}
\newcommand\longleftrightarrowfill@{%
  \arrowfill@\leftarrow\relbar\rightarrow}
\makeatother

%%% Local Variables:
%%% mode: latex
%%% TeX-master: t
%%% End:

% ROMAN

% SANS SERIF

% UNDERLINED

% BOLD

\newcommand{\bS}{{\mathbf{S}}}

% CALLIGRAPHIC

\newcommand{\cL}{\mathcal{L}}

% SCRIPT
\newcommand{\sA}{\mathscr{A}}

\newcommand{\sG}{\mathscr{G}}

\newcommand{\sM}{\mathscr{M}}

% FRAKTUR

\newcommand{\fs}{{\mathfrak s}}

\newcommand{\fM}{{\mathfrak M}}

% BOLD FRAKTUR

% SLASHED
\newcommand{\slD}{\slashed D}

% D-ed

%

%%% Local Variables:
%%% mode: latex
%%% TeX-master: t
%%% End:

%\addbibresource{preamble/refs.bib}
\author{Gorapada Bera}
\title{Remarks on $\Sp(1)$-Seiberg-Witten equation over $3$-manifolds}

\begin{document}
\maketitle
\date
\begin{abstract}We prove that the $\Sp(1)$-Seiberg-Witten equation over a closed hyperbolic $3$-manifold ${\H}^3/\Gamma$ always admits a canonical irreducible solution induced by the hyperbolic metric. We also prove that the Zariski tangent space of the moduli space at this canonical solution is same as the Zariski tangent space of the moduli space of locally conformally flat structures at the hyperbolic metric. This space is again same as the space of trace-free Codazzi tensors and carries an injection to $H^1(\Gamma,\R^{1,3})$, the first group cohomology of the $\Gamma$-module $\R^{1,3}$. In particular, if $H^1(\Gamma,\R^{1,3})=0$ then the canonical irreducible solution is infinitesimally rigid. We also prove that the $\Sp(1)$-Seiberg-Witten equation over $S^1\times \Sigma$ has no irreducible solutions and the moduli space of reducible solutions is same as the moduli space of flat $\SU(2)$-connections.     
\end{abstract}
%\tableofcontents
\section{Introduction: Main results}
One of the simplest generalized non-abelian Seiberg-Witten equation is the $\Sp(1)$-Seiberg-Witten equation. The representation of $\Sp(1)$ on the quaternion $\H$ given by the right multiplication after conjugation and a $\Spin^{h}$-structure $\fs$ on an oriented closed Riemannian $3$ or $4$-manifold $(M,g)$ defines this equation. \citet{Lim2003} has defined topological invariant of integral homology $3$-spheres by counting solutions (with some correction terms) of perturbed $\Sp(1)$-Seiberg-Witten equation. Although the tremendous success of the classical (abelian) Seiberg-Witten equation motivated people to study the non-abelian generalization $20$ years ago but has little success so far due to the presence of reducibles and non-compactness phenomena. Recently generalized Seiberg-Witten equation is again gaining attention due to its importance not only in low dimensional topology but also in special holonomy \cite{Doan2017d}.

In this article we will restrict ourselves to closed $3$-manifolds. It turns out that there is a unique $\Spin^{h}$ structure on a $3$-manifold up to isomorphism and the moduli space $\sM^h_{SW}$ of solutions of the $\Sp(1)$-Seiberg-Witten equation is compact. A solution is irreducible if and only the spinor is not identically zero. The main goal of this article is to address the following questions for certain $3$-manifolds:
\begin{enumerate}[1.]
\item Does there exist an irreducible solution?
\item Does there exist an irreducible infinitesimally rigid (unobstructed) solution?
\item Is it possible that no irreducible solutions exist?
\end{enumerate}
We answer the first two questions by choosing $M$ to be a closed hyperbolic $3$-manifold ${\H}^3/\Gamma$ and answer the third question by choosing $M$ to be $S^1\times \Sigma$, a product of circle with Riemann surface. We would like to point out that \citet{Walpuski18} first found the irreducible solution on hyperbolic $3$-manifold in an unpublished document.
We now state our main theorems.
\begin{theorem} \label{thm hyperbolic sol}Let $(M,g)$ be a hyperbolic $3$-manifold ${\H}^3/\Gamma$ with $\Gamma$ be a co-compact discrete subgroup of $\SO^+(1,3)$ and $g$ be the hyperbolic metric. Then the hyperbolic metric $g$ induces an irreducible solution $(A_0,\Phi_0)$ of the $\Sp(1)$-Seiberg-Witten equation \autoref{eq sp(1) SW main}. Moreover, the Zariski tangent space of the moduli space $\sM^h_{SW}$ of solutions at $(A_0,\Phi_0)$ is same as the Zariski tangent space of the moduli space $\fM_{lcf}$ of locally conformally flat structures on $M$ at $[g]$, and both are equal to the following space 
$$H^1(C,g):=\{h\in \sym^2_0(M,g) :d_{LC} h=0\}=\{\text{Trace-free Codazzi tensors on $(M,g)$}\}.$$ 
\end{theorem}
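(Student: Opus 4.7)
The plan has three steps: constructing $(A_0,\Phi_0)$, computing each of the two Zariski tangent spaces as the space of trace-free Codazzi tensors, and then concluding the equalities.

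For the construction, I would start from the holonomy representation $\rho\co\Gamma\to\SO^+(1,3)$ and the exceptional isomorphism $\Spin^+(1,3)\iso\SL(2,\C)$. The Cartan decomposition $\so(1,3)=\so(3)\oplus\fp$ with $\fp\iso\R^3$ splits the tautological flat $\so(1,3)$-connection on the associated principal bundle as $\omega_0+\theta_0$, where $\omega_0$ is the spin lift of the Levi--Civita connection and $\theta_0$ is the tautological soldering form. Via $\fp\iso\Im\,\H\iso\sp(1)$, this recasts $\theta_0$ as a nowhere-vanishing spinor $\Phi_0$ for the unique $\Spin^{h}$-structure and $\omega_0$ as an $\Sp(1)$-connection $A_0$. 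The $\Sp(1)$-Seiberg-Witten equations should then reduce, under this ansatz, to the two Cartan structure equations of a hyperbolic $3$-manifold: torsion-freeness becomes the Dirac equation for $\Phi_0$, and constant sectional curvature $-1$ becomes the curvature moment-map equation for $A_0$. Irreducibility is automatic since $\Phi_0$ is nowhere zero.

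For the Zariski tangent space to $\fM_{lcf}$ at $g$, I would invoke the classical $3$-dimensional description: a Riemannian $3$-manifold is LCF if and only if its Cotton-York tensor vanishes, equivalently its Schouten tensor is Codazzi. Linearizing at $g$ and gauge-fixing by the full diffeomorphism group together with conformal rescalings, the deformation is parametrized by a symmetric $2$-tensor $h$; the conformal gauge choice enforces $\tr_g h=0$, and the linearized Cotton-York condition reduces to the Codazzi equation $d_{LC}h=0$. This identifies the Zariski tangent space of $\fM_{lcf}$ at $g$ with $H^1(C,g)$.

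For the Zariski tangent space to $\sM^h_{SW}$ at $(A_0,\Phi_0)$, I would linearize the $\Sp(1)$-SW map together with a Coulomb-type gauge condition. A deformation $(a,\phi)\in\Omega^1(M,\sp(1))\oplus\Gamma(S)$ of $(A_0,\Phi_0)$ can be recast, via the $\theta_0\leftrightarrow\Phi_0$ and $\omega_0\leftrightarrow A_0$ correspondences, as a first-order deformation of the soldering form and spin connection, and the linearized SW equations plus Coulomb gauge should translate into the condition that the symmetric tensor $h$ built from $\phi$ is traceless and Codazzi. Combined with the previous step this yields both equalities. The hard part will be this last translation: explicitly carrying out the linearization of the $\Sp(1)$-SW equations at $(A_0,\Phi_0)$, identifying the tensor avatar of $(a,\phi)$, and verifying via a Weitzenb\"ock-type identity on $\H^3/\Gamma$ that the linearized Dirac equation plus the Coulomb gauge is equivalent to $\tr_g h=0$ together with $d_{LC}h=0$, with no missing obstruction arising from the curvature equation.
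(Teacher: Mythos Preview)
Your construction has a type mismatch that prevents it from going through as stated. The soldering form $\theta_0\in\Omega^1(M,\fp)$ is a $1$-form valued in $\fp\cong\R^3$, hence a section of $T^*M\otimes T^*M$, whereas a spinor is a section of the rank-$4$ bundle $\bS$. In the paper's explicit model the $\Spin^h$-structure is $\fs=\SO(V)$ with $V=\R\oplus T^*M$, the spinor bundle is $\bS=V$ itself, and the canonical solution is $A_0=\nabla_{LC}$ (the block-diagonal Levi--Civita connection on $V$, i.e.\ $a=0$ in the parametrization $A\leftrightarrow a\in\Omega^1(M,T^*M)$ of spin connections) together with the constant section $\Phi_0=(1,0)\in\Gamma(\R\oplus T^*M)$. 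Geometrically $\Phi_0$ is the unit ``position vector'' in the hyperboloid model of $\H^3$, not the soldering form. For this $\Phi_0$ the Dirac equation is \emph{trivially} satisfied (since $(1,0)$ is parallel when $a=0$), not via torsion-freeness; only the curvature equation carries content, and it reduces to $R_g=g$ in $\Omega^1(M,T^*M)$, which is precisely constant sectional curvature $-1$. So your intuition that one of the SW equations becomes the constant-curvature structure equation is correct, but the Dirac/torsion identification is not.

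For the Zariski tangent space of $\sM^h_{SW}$, your plan to translate via the Cartan correspondence rests on the identification above and leaves the decisive computation unspecified. The paper's mechanism is different and concrete: it first proves a general identity showing that at \emph{any} solution $(A,\Phi)$ the square $\cL^2_{(A,\Phi)}$ is block-diagonal, the off-diagonal entries cancelling precisely because the SW equations hold. At $(a,f,\sigma)=(0,1,0)$ the four diagonal blocks are $\Delta_{LC}+2g\,\tr(\cdot)+2*_3\tau(\cdot)$, $\Delta+6$, $\Delta+5$, and $\Delta_{LC}+1$, so positivity kills the $f$, $\sigma$, $\xi$ components outright. On the remaining $a$-block one uses that $\tr$ and $\tau$ commute with $\Delta_{LC}$ to extract $\Delta(\tr a)+6\tr a=0$ and $\Delta(\tau a)+4\tau a=0$, forcing $\tr a=\tau a=0$; the residual condition $\Delta_{LC}a=0$ on a trace-free symmetric tensor is then equivalent to the Codazzi equation $d_{LC}a=0$. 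This $\cL^2$-diagonalization is the engine of the argument, and your Weitzenb\"ock sketch does not supply a substitute for it. Your treatment of the $\fM_{lcf}$ side is fine and agrees with the paper, which invokes the standard linearized Cotton-tensor description.
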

 \citet{Lafontaine1983} (see \autoref{thm Lafontaine}) had shown that there is an injective map $H^1(C,g)\hookrightarrow H^1(\Gamma,\R^{1,3})$, the first group cohomology of the $\Gamma$-module $\R^{1,3}$. Thus we obtain the following:
\begin{cor}
     If  the group cohomology $H^1(\Gamma,\R^{1,3})$ vanishes then the irreducible solution $(A_0,\Phi_0)$ in \autoref{thm hyperbolic sol} is infinitesimally rigid (unobstructed).
\end{cor}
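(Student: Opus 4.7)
The plan is entirely formal: the corollary is a two-line deduction from the theorem preceding it combined with the cited result of Lafontaine, so there is no hard content to discover—only identifications to compose. First I would invoke \autoref{thm hyperbolic sol}, which has already identified the Zariski tangent space of the moduli space $\sM^h_{SW}$ at the canonical solution $(A_0,\Phi_0)$ with the space $H^1(C,g)$ of trace-free Codazzi tensors on the hyperbolic $3$-manifold $(M,g)$. Second, I would invoke the Lafontaine injection $H^1(C,g)\hookrightarrow H^1(\Gamma,\R^{3,1})$ mentioned immediately before the corollary (and cited as \autoref{thm Lafontaine}). Composing the two: if the target group cohomology $H^1(\Gamma,\R^{3,1})$ is zero, then the source of the injection is forced to be zero, and hence the Zariski tangent space at $(A_0,\Phi_0)$ vanishes.

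To close the argument it only remains to match the conclusion ``Zariski tangent space $=0$'' with the terminology ``infinitesimally rigid (unobstructed)'': by definition, a point of $\sM^h_{SW}$ is unobstructed precisely when its Zariski tangent space is trivial, so these two formulations coincide. I anticipate no obstacle at all in this argument, because the genuine analytic and geometric work—constructing the solution $(A_0,\Phi_0)$, setting up the deformation complex, and proving the identification with trace-free Codazzi tensors—has been carried out in \autoref{thm hyperbolic sol}, while the cohomological injection is Lafontaine's theorem. The corollary is therefore a purely formal implication, and no further computation, estimate, or gauge-fixing step is required.
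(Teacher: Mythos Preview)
Your proposal is correct and matches the paper's own reasoning exactly: the paper does not even give a separate proof, but simply notes Lafontaine's injection $H^1(C,g)\hookrightarrow H^1(\Gamma,\R^{3,1})$ immediately before stating the corollary and writes ``Thus we obtain the following,'' which is precisely the formal composition you describe.
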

It is known in the literature that there are infinitely many hyperbolic $3$-manifolds with $H^1(\Gamma,\R^{1,3})=0$ (see \autoref{eg rigid}). Also out of the first $4500$ two generator hyperbolic manifolds in the Hodgson-Weeks census, $4439$ has $H^1(\Gamma,\R^{1,3})=0$. For all those hyperbolic $3$-manifolds $(A_0,\Phi_0)$ is infinitesimally rigid.

\begin{theorem}\label{thm SW Riemann surface}Let $M$ be $S^1\times \Sigma$, a product of a circle with a Riemann surface $\Sigma$ with a product metric. Then the $\Sp(1)$-Seiberg-Witten equation \autoref{eq sp(1) SW main} over $M$ does not have any irreducible solution. In particular, the moduli space $\sM^h_{SW}$ can be identified with the moduli space of flat $\SU(2)$-connections over $M$.
\end{theorem}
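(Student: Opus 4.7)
My plan is to handle reducibles directly and, for irreducibles, to combine a Bochner--Weitzenböck identity with the product structure of $M = S^1 \times \Sigma$.

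A reducible solution has $\Phi = 0$, forcing $F_A = \mu(0) = 0$ so that $A$ is a flat $\Sp(1) = \SU(2)$-connection; conversely any flat $\SU(2)$-connection paired with $\Phi = 0$ solves the equation. Modding out by gauge identifies the reducible locus with the moduli space of flat $\SU(2)$-connections on $M$.

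For non-existence of irreducibles I first apply the $Sp(1)$-Weitzenböck. Starting from $\slashed D_A\Phi = 0$, using $\slashed D_A^*\slashed D_A = \nabla_A^*\nabla_A + s_M/4 + \tfrac12 c(F_A)$, substituting $*F_A = \mu(\Phi)$ and unwinding the hyperkähler moment-map identity (from $\mu_i(\Phi) = \pm \bar\Phi e_i\Phi$ together with $\sum_i e_i^2 = -3$ in $\H$ one computes $c(\mu(\Phi))\cdot\Phi = C|\Phi|^2\Phi$ for a positive constant $C$ after sorting out signs), I obtain
\[
0 = \int_M \bigl(|\nabla_A\Phi|^2 + \tfrac{s_M}{4}|\Phi|^2 + C'|\Phi|^4\bigr).
\]
Because the metric is a product, $s_M = s_\Sigma$, which settles the cases $\Sigma = S^2$ and $\Sigma = T^2$ at once. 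For higher genus $\Sigma$ the Bochner term alone is no longer sufficient.

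To handle the general case I exploit the product structure of $M$ directly. Clifford multiplication by the parallel $1$-form $dt$ is a parallel complex structure on the spinor bundle $W$, splitting $W = W_+\oplus W_-$ into its $\pm i$-eigenbundles (pulled back from $\Sigma$), and in a gauge where $A_t \equiv 0$ the Dirac equation becomes the first-order system on $\Sigma$ parametrised by $t\in S^1$,
\[
\partial_t \Phi_\pm = \pm i\,\slashed D_{A_\Sigma}\Phi_\mp, \qquad \partial_t A_\Sigma = *_\Sigma \mu_\Sigma(\Phi),
\]
together with the constraint $*_\Sigma F_{A_\Sigma} = \mu_t(\Phi)$. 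I would interpret this, up to gauge, as the downward gradient flow of a suitable $2$-dimensional Chern--Simons--Dirac functional on $\Sigma$; $S^1$-periodicity then forces the functional to be constant along the flow, hence the flow stationary, $\partial_t A_\Sigma \equiv 0$ and $\partial_t\Phi \equiv 0$. Stationarity imposes $\mu_\Sigma(\Phi) \equiv 0$, and because $|\bar\Phi e_i\Phi| = |\Phi|^2$ for every unit imaginary quaternion $e_i$, the hyperkähler moment map satisfies $\mu^{-1}(0) = \{0\}$; thus $\Phi \equiv 0$, contradicting irreducibility, while the remaining constraint $*_\Sigma F_{A_\Sigma} = \mu_t(0) = 0$ just reconfirms flatness. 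The main obstacle is the rigorous identification of the reduced system with a gradient flow in the presence of the moment-map constraint; a robust fallback is a direct Pohozaev-type computation applying $\int_{S^1}\partial_t(\,\cdot\,)\,\dt = 0$ to a carefully chosen quadratic expression in $(A_\Sigma,\Phi)$ to conclude $\partial_t A_\Sigma \equiv 0$ and $\partial_t\Phi \equiv 0$ directly from periodicity, after which the moment-map step is purely algebraic.
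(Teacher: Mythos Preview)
Your treatment of reducibles and the Weitzenböck argument for $g(\Sigma)\leq 1$ are fine (the paper also records the latter as a remark). The substance of the problem is the higher-genus case, and here your argument has a real gap.

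The paper does not attempt a gradient-flow identification at all. It quotes a lemma of Doan: if an irreducible solution exists on $S^1\times\Sigma$, then every solution is gauge-equivalent to a circle-invariant one. Granting circle-invariance, the paper then uses \emph{only the curvature equation} $F_{\ad(A)}=\mu(\Phi)$, never the Dirac equation. In the explicit model $\bS=\R\oplus T^*M$, $\ad(\fs)=T^*M$, a circle-invariant connection has $*_3 F_{\ad(A)}+R_g$ supported in the $dt$-column of $T^*M\otimes T^*M$; in particular its $(T^*\Sigma)\otimes(T^*\Sigma)$-block vanishes. The corresponding block of $\mu(f,\lambda\,dt+\omega)$ is
\[
(f^2-\lambda^2-\abs{\omega}^2)g_\Sigma-2f\lambda\,\vol_\Sigma+\omega\otimes\omega,
\]
and setting this to zero one reads off $\omega\otimes\omega=\tfrac12\abs{\omega}^2 g_\Sigma$ (trace-free symmetric part), $f^2-\lambda^2-\tfrac12\abs{\omega}^2=0$ (trace), and $f\lambda=0$ (skew part). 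The first forces $\omega=0$ (a rank-one tensor cannot be a nonzero multiple of $g_\Sigma$ on a surface), whence $f=\lambda=0$ and $\Phi=0$.

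Your post-invariance step ``$\mu_\Sigma(\Phi)=0\Rightarrow\Phi=0$'' reaches the same conclusion, but the reason you give, $\mu^{-1}(0)=\{0\}$, is not sufficient: only the $T^*\Sigma$-components of $\mu$ are known to vanish, not all of $\mu$. What one actually needs is that for $\Phi\neq 0$ the linear map $\mu(\Phi)\co T^*_xM\to\ad(\fs)_x$ has kernel of dimension at most one, so no $2$-plane can lie in it; this is a short computation equivalent to the paper's block analysis above. More importantly, your route \emph{to} circle-invariance is where the proof is genuinely incomplete: you yourself flag the gradient-flow interpretation in the presence of the moment-map constraint as ``the main obstacle'', and the Pohozaev fallback is not specified (which quadratic quantity, and why does its $t$-derivative have a sign?). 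Without either a precise energy/flow identity or an appeal to Doan's lemma, the higher-genus case remains unproved.
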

\paragraph{Acknowledgements} I am grateful to my PhD supervisor Thomas Walpuski for introducing me to these equations and generously sharing the unpublished document \cite{Walpuski18}. Additionally, I extend my thanks to Misha Kapovich for answering my questions regarding examples of locally rigid hyperbolic metrics as a conformally flat structure.

 \section{$\Sp(1)$-Seiberg-Witten equation}In this section we discuss the basic set up and some identities for the $\Sp(1)$-Seiberg-Witten equation on an oriented Riemannian $3$–manifold. This will be an example of a generalized Seiberg-Witten equation discussed in \cite[Section 2]{Doan2017a}. 
Define quaternionic representations $\gamma:\Sp(1)\to \operatorname{End}(\H)$ by left multiplication and $\rho:\Sp(1)\to \operatorname{End}(\H)$ by right multiplication after conjugation that is,
 $$\gamma(p)\Phi=p\Phi,\ \ \ \ \rho(p)\Phi=\Phi\bar p.$$
 Denote their Lie algebra homomorphisms again by $\gamma, \rho:\Im \H \to \End(\H)$. Furthermore, we define $\tilde \gamma:$ Im $\H \otimes \Im \H \to \End(\H)$  by $$\tilde \gamma(v \otimes \xi)\Phi:=\gamma(v)\circ \rho(\xi)\Phi=-v\Phi\xi.$$
The map $\mu:\H\to(\Im \H\otimes \Im \H)^*$ defined by
	$$\mu(\Phi):=\frac 12\tilde \gamma^*(\Phi \Phi^*)$$
	is a \textbf{hyperk\"ahler moment map} that is, it is $\H$-equivariant and $\inp{(d\mu)_\Phi \phi}{v \otimes \xi}=\inp{\gamma(v)\rho(\xi)\Phi}{\phi}$.
	The corresponding bilinear map $\mu:\H\times \H\to(\Im \H\otimes \Im \H)^*$ is defined by
	$\mu(\Phi,\Psi):=\frac 12\tilde \gamma^*(\Phi \Psi^*)$.
	
Set $\Spin^{h}(3):= \frac{\Sp(1)\times \Sp(1)}{\<-1\>}\cong \SO(4)$. We have the following short exact sequence
\begin{equation}\label{eq exact 1}1\to \Sp(1)\xrightarrow{p\mapsto[(1,p)]}
 \Spin^{h}(3)\to \SO(3)\to 1.\end{equation}

\begin{definition}
	A \textbf{$\Spin^{h}$- structure} on an oriented closed Riemannian $3$-manifold $(M,g)$ is a principal $\Spin^{h}(3)$-bundle $\fs$ with an isomorphism  \begin{equation*}
  	\fs \times_{\Spin^{h}(3)} \SO(3)\cong \SO(TM).\qedhere   \end{equation*}
\end{definition}

%%%%%%%%%%%%%%%%%%%%%%%%%%%%%%%%%%%%%%%%%%%%%%%%%%%%%%%%
The $\Spin$-structures and $\Spin^c$- structures on $(M,g)$ are all examples of $\Spin^{h}$- structures. But the following proposition says that all of them isomorphic. 
\begin{prop} An oriented closed Riemannian $3$-manifold $(M,g)$ always admits a $\Spin^{h}$- structure and it is unique up to isomorphism.
\end{prop}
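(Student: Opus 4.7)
For existence, I would use that every closed oriented $3$-manifold is parallelizable, and so in particular spin: it admits a principal $\Sp(1)$-bundle $P_{\Spin}\to M$ with $P_{\Spin}/\{\pm 1\}\cong SO(TM)$. Consider the group homomorphism
$$\iota\co \Sp(1)\into \Spin^{h}(3),\qquad p\mapsto [(p,1)].$$
A short check from \autoref{eq exact 1} shows that the composition $\Sp(1)\xrightarrow{\iota}\Spin^{h}(3)\onto \SO(3)$ is the standard double cover (i.e.\ the adjoint action on $\Im\H\cong\R^3$). Extending the structure group of $P_{\Spin}$ along $\iota$ then produces a principal $\Spin^{h}(3)$-bundle $\fs:=P_{\Spin}\times_\iota \Spin^{h}(3)$ satisfying $\fs/\Sp(1)\cong P_{\Spin}/\{\pm 1\}\cong SO(TM)$, i.e.\ a $\Spin^{h}$-structure.

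For uniqueness, I would phrase $\Spin^{h}$-structures as homotopy classes of lifts of the classifying map $M\to B\SO(3)$ of $SO(TM)$ along the fibration $B\Spin^{h}(3)\to B\SO(3)$, whose fibre is $B\Sp(1)\simeq\mathbf{H}\mathbf{P}^\infty$. Because $\Sp(1)\cong S^3$ is $2$-connected, $B\Sp(1)$ is $3$-connected, and so on the $3$-dimensional CW complex $M$ every existence and every difference obstruction for sections of the pulled-back fibration lies in a cohomology group with coefficients $\pi_j(B\Sp(1))$ for some $j\leq 3$; all such groups vanish. This yields uniqueness of the $\Spin^{h}$-structure up to isomorphism (and indeed another proof of existence).

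The main subtlety, which I expect to be the principal obstacle, is reconciling the homotopy-theoretic uniqueness of lifts with isomorphism of $\Spin^{h}$-bundles \emph{as lifts} of $SO(TM)$. Because the embedded subgroup $\Sp(1)\subset\Spin^{h}(3)$ is normal but not central, the usual non-abelian Čech cohomology sequence does not directly exhibit the set of $\Spin^{h}$-structures refining $SO(TM)$ as an $H^1(M,\Sp(1))$-torsor; the classifying-space formulation above sidesteps this issue cleanly, at the cost of a short verification that homotopy classes of lifts of classifying maps correspond bijectively to isomorphism classes of $\Spin^{h}$-structures.
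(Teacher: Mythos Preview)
Your argument is correct and rests on the same obstruction-theoretic core as the paper's: the $2$-connectedness of $\Sp(1)$ forces all relevant obstruction classes on a $3$-complex to vanish. The packaging differs slightly. For uniqueness the paper works directly with the bundle $\Iso_{\SO(TM)}(\fs_1,\fs_2)\to M$, whose fibre is $\Sp(1)$, and observes that the primary obstruction to a section lies in $H^4(M,\pi_3(\Sp(1)))=H^4(M,\Z)=0$; this is the ``space-level'' counterpart of your classifying-space lift argument with fibre $B\Sp(1)$. The paper's formulation has the advantage that it sidesteps the verification you flag in your last paragraph (matching homotopy classes of lifts with isomorphism classes of structures), since a section of $\Iso_{\SO(TM)}(\fs_1,\fs_2)$ is literally an isomorphism of $\Spin^h$-structures covering $\id_{\SO(TM)}$. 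Your version, on the other hand, gives existence and uniqueness in one stroke. Either route is fine; they are two incarnations of the same obstruction computation.
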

\begin{proof} Since the $3$-manifold $(M,g)$ always admits a $\Spin$-structure (as $w_2(TM)=0$) we obtain the existence of a $\Spin^{h}$- structure. Now we prove the uniqueness. Two $\Spin^{h}$- structures $\fs_1,\fs_2$ are isomorphic if and only if the fiber bundle $\operatorname{Iso}_{\SO(TM)}(\fs_1,\fs_2)\to M$ has a section. Since the fibers ${\operatorname{Iso}_{\SO(TM)}(\fs_1,\fs_2)}_{|x})=\Sp(1)$, obstruction to the existence of such a section is an element in $H^4(M,\Z)=0$. Thus $\fs_1,\fs_2$ are isomorphic.
\end{proof}
%%%%%%%%%%%%%%%%%%%%%%%%%%%%%%%%%%%%%%%%%%%

A $\Spin^{h}$- structure $\fs$ on $(M,g)$ induces the following associated bundles and maps: 
\begin{itemize}
	\item The \textbf{spinor bundle}, $\bS:=\fs \times_{(\gamma\times \rho)} \H$,
		\item The \textbf{adjoint bundle}, $\ad(\fs):=\fs \times_{\rho} \Im \H$, 
	\item The \textbf{Clifford multiplication map} $\gamma: TM \to \End(\bS)$ induced by $\gamma:\Im \H\to \operatorname{End}(\H)$,
	\item $\rho:\ad(\fs) \to \End(\mathbf S),\ \ 
	\tilde \gamma: TM\otimes \ad(\fs) \to \End(\mathbf S)$, induced by $\rho$ and $\tilde \gamma$,  
	\item The \textbf{moment map} $\mu:\mathbf S \to \Lambda^2(T^*M)\otimes \ad(\fs)$, induced by the hyperk\"ahler moment map $\mu$.  
\end{itemize}
\begin{definition}
	A \textbf{spin connection} on a $\Spin^{h}$ structure $\fs$ is a connection on $\fs$ which induces the Levi-Civita connection on $TM$. Denote by $\sA(\fs)$ the space of all spin connections on $\fs$. Denote by $\ad(A)$ the connection on $\ad(\fs)$ induced by a spin connection $A$.
	We define the \textbf{group of gauge transformations} $\sG(\fs)$ by
		$$\sG(\mathfrak s):=\{u\in\text{Aut}(\mathfrak s): u \ \text{acts trivially on}\   \SO(TM) \}$$
		and the \textbf{action }of $\sG(\fs)$ on $\sA(\fs)\times \Gamma(\textbf{S})$ by
		$u\cdot(A,\Phi):=((u^{-1})^*A,u\cdot\Phi).$
\end{definition}

\begin{remark}
	$\sA(\fs)$ is nonempty and an affine space over $\Omega^1(M,\ad(\fs))$.
\end{remark}

\begin{definition}
	Given a $\Spin^{h}$ structure $\fs$ on $M$ and a spin connection $A\in\sA(\fs) $ the \textbf{Dirac operator} $\slD_{A}: \Gamma(\mathbf S)\to \Gamma(\mathbf S)$ is defined by 
	$$ \slD_{A}\Phi=\sum_{i=1}^{3}\gamma(e_i)\nabla_{A,{e_i}}\Phi$$
	where $ \{ e_1,e_2,e_3\}$ is an oriented local orthonormal frame of $TM$.
\end{definition}

\begin{definition}Given a $\Spin^{h}$ structure $\fs$ on $(M,g)$, the \textbf{$\Sp(1)$-Seiberg-Witten equation} is the following set of equations: for $A\in\sA(\fs) $, $\Phi \in  \Gamma(\bS)$:
	\begin{equation} \label{eq sp(1) SW main}
	\begin{cases*}
		\slD_{A}\Phi=0\\
	F_{\ad(A)}=\mu(\Phi). 
		\end{cases*}\qedhere
		\end{equation}
\end{definition}
\begin{remark} \label{rmk complexification}If we replace $\H$ by the quaternionic hermitian vector space $\H\tn_\C\C^2$ and the quaternionic representation $\rho$ by $\rho^\C:\SU(2)\to \End_\C(\H\tn_\C\C^2)$ defined by $\rho^\C(A)(q\tn v)= q\tn Av$, then will obtain another generalized Seiberg-Witten equation called \textbf{$\SU(2)$-monopole equation}. This is closely related to the $\PU(2)$ monopole equation appeared in the literature. By the following commutative diagram
\[\begin{tikzcd}
\Sp(1) \arrow{r}{\rho^\C} \arrow[swap]{d}{\rho} & \End_\C(\H\tn_\C\C^2)= \End_\C(\C^4)\\
\End_\R(\H) \arrow{ru}{\tn \C}
\end{tikzcd}
\]
we observe that the Dirac operator $\slD^\C_A: \Gamma(\mathbf S\tn_\C\C^2)\to \Gamma(\mathbf S\tn_\C C^2)$ in the $\SU(2)$-monopole equation is the complexification of the Dirac operator $\slD_A:\Gamma(\mathbf S)\to \Gamma(\mathbf S)$ in the $\Sp(1)$-Seiberg-Witten equation (see \cite[Lemma 2.1]{Lim2003}). Moreover, the $\SU(2)$-monopoles $(A,\Phi)$ with $\Phi$ real are exactly the solutions of the $\Sp(1)$-Seiberg-Witten equation.
\end{remark}

\begin{definition}The \textbf{$\Sp(1)$-Seiberg-Witten moduli space} $\sM^h_{SW}$ is defined by
\begin{equation*}
	\sM^h_{SW}:=\frac{\{(A,\Phi)\in \sA(\fs)\times \Gamma(\bS):(A,\Phi) \ \text{satisfies}\ \autoref{eq sp(1) SW main} \}}{\sG(\fs)}.
\end{equation*}
A solution $(A,\Phi)\in \sA(\fs)\times \Gamma(\bS)$ of the equation \autoref{eq sp(1) SW main} is called \textbf{irreducible} if the stabilizer of $(A,\Phi)$ is the trivial group, otherwise it is called \textbf{reducible}.  
\end{definition}
\begin{remark}A solution $(A,\Phi)$ is reducible if and only if $\Phi=0$. Thus the moduli space of reducible solutions is essentially the moduli space of flat $\SU(2)$-connections over $M$.
\end{remark}

	\begin{prop}[{\cite[Proposition 5.1.5]{Morgan1996}}](Lichenerowicz-Weitzenböck formula)
	Suppose $A\in\sA(\fs)$ and $\Phi \in\Gamma(\bS)$. Then
	$$\slD^2_A\Phi=\nabla^*_A\nabla_A\Phi+\tilde\gamma(F_{\ad(A)})\Phi+\frac{\operatorname{scal}_g}{4} \Phi. $$
\end{prop}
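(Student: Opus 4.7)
The plan is to carry out the standard Bochner--Weitzenböck calculation, adapted to the $\Spin^h$ setup where the connection $A$ on $\fs$ splits infinitesimally into the Levi-Civita part on $TM$ and the adjoint part $\ad(A)$ on $\ad(\fs)$. Since the formula is a pointwise identity, I work at a fixed point $p\in M$ in a synchronous oriented orthonormal frame $\{e_1,e_2,e_3\}$, i.e.\ one with $\nabla_{e_i}e_j|_p=0$; this kills all Christoffel-type terms when differentiating $\gamma(e_j)$.

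The first step is to expand
\[
 \slD_A^2\Phi \;=\; \sum_{i,j}\gamma(e_i)\,\nabla_{A,e_i}\!\bigl(\gamma(e_j)\nabla_{A,e_j}\Phi\bigr)
 \;=\; \sum_{i,j}\gamma(e_i)\gamma(e_j)\,\nabla_{A,e_i}\nabla_{A,e_j}\Phi
\]
at $p$ and to split the double sum into its symmetric and antisymmetric parts in $(i,j)$. The Clifford relation $\gamma(e_i)\gamma(e_j)+\gamma(e_j)\gamma(e_i)=-2\delta_{ij}$ turns the symmetric part into $-\sum_i \nabla_{A,e_i}\nabla_{A,e_i}\Phi$, which equals the connection Laplacian $\nabla_A^*\nabla_A\Phi$ in the chosen frame. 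The antisymmetric part collapses to $\sum_{i<j}\gamma(e_i)\gamma(e_j)\,F_A(e_i,e_j)\Phi$, where $F_A\in\Omega^2(M,\End(\bS))$ is the curvature of $A$ acting on the spinor bundle.

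The next step is to decompose this curvature. The short exact sequence \autoref{eq exact 1} identifies the Lie algebra $\spin^h(3)\cong\so(3)\oplus\sp(1)$, so under the representation $\gamma\times\rho$ on $\H$ the curvature $F_A$ acts as
\[
 F_A(X,Y)\Phi \;=\; \tfrac{1}{4}\sum_{k,l}R(X,Y,e_k,e_l)\,\gamma(e_k)\gamma(e_l)\Phi \;+\; \rho\bigl(F_{\ad(A)}(X,Y)\bigr)\Phi,
\]
because the Levi-Civita piece of $A$ contributes the Riemann tensor via the spinorial representation of $\so(3)$, while the $\ad(\fs)$ piece contributes through $\rho$. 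Substituting this decomposition into the antisymmetric sum produces two terms. The Riemannian term is the classical Lichnerowicz computation: using the first Bianchi identity together with the Clifford algebra of $\{\gamma(e_i)\}$ it collapses to $\tfrac{1}{4}\operatorname{scal}_g\cdot\Phi$. The $\ad(\fs)$ term is $\sum_{i<j}\gamma(e_i)\gamma(e_j)\rho\bigl(F_{\ad(A)}(e_i,e_j)\bigr)\Phi$; since left and right multiplication on $\H$ commute, $\gamma$ and $\rho$ commute on $\bS$, and this rewrites in terms of the map $\tilde\gamma$ of Section~2 as $\tilde\gamma(F_{\ad(A)})\Phi$ under the natural extension of $\tilde\gamma$ from $TM\otimes\ad(\fs)$ to $\Lambda^2T^*M\otimes\ad(\fs)$.

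The only real obstacle is bookkeeping: verifying the curvature decomposition on $\bS$ with the correct normalizations, and carefully executing the Bianchi-plus-Clifford reduction that yields exactly $\tfrac{\operatorname{scal}_g}{4}$ rather than a constant off by a factor of two. Once these two standard identities are in hand, assembling the three summands gives the claimed Weitzenböck formula, which is then independent of $p$ and of the choice of frame.
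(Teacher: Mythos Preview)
The paper does not prove this proposition; it is quoted from \cite[Proposition 5.1.5]{Morgan1996} and stated without proof. Your outline is the standard Bochner--Lichnerowicz argument adapted to the $\Spin^h$ setting and is correct; it is essentially what one finds in Morgan's book (for the $\Spin^c$ case) or any textbook treatment, so there is nothing to compare against here.
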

\begin{remark}If $(A ,\Phi)$ is a solution of the equation \autoref{eq sp(1) SW main} then 
$$\norm{\nabla_A\Phi}^2_{L^2}+\norm{\mu(\Phi)}^2_{L^2}+\frac 14\int_M\operatorname{scal}_g\abs{\Phi}^2=0.$$
Therefore, if $\operatorname{scal}_g\geq 0$ then $\mu(\Phi)=0$ and hence $\Phi=0$ (see \autoref{prop moment map}). 
\end{remark}
\begin{definition}\label{def sw map linearzi}
    The \textbf{Seiberg-Witten map} $\operatorname{SW}: \sA(\fs)\times \Gamma(\bS)\to \Omega^1(M,\ad(\fs))\times \Gamma(\bS)$ is defined by
    \[\operatorname{SW}(A,\Phi)=(*F_{\ad(A)}-*\mu(\Phi),-\slD_{A}\Phi).\]
Denote by $G_{(A,\Phi)}:\Omega^0(M,\ad(\fs))\to \Omega^1(M,\ad(\fs))\oplus \Gamma(\bS) $, the \textbf{linearization map of the gauge group action} at $(A,\Phi)$, which is given by
\[G_{(A,\Phi)}\xi=(-d_{\ad(A)} \xi,\rho(\xi)\Phi).\]
 	The \textbf{gauge and co-gauge fixed linearization} of the Seiberg-Witten map at a solution $(A,\Phi)$,
	$$\cL_{(A,\Phi)}:\Omega^1(M,\ad(\fs))\oplus \Gamma(\bS)\oplus\Omega^0(M,\ad(\fs))\to \Omega^1(M,\ad(\fs))\oplus \Gamma(\bS)\oplus\Omega^0(M,\ad(\fs))$$ 
	is defined by  
	\[
	\cL_{(A,\Phi)}:=
 \begin{bmatrix}
		d\operatorname{SW}_{|(A,\Phi)}& G_{(A,\Phi)}\\
		G^*_{(A,\Phi)}&0
		\end{bmatrix}=
	\left[ {\begin{array}{ccc}
		*_3d_{\ad(A)}&-2*\mu(\Phi,\cdot)&-d_{\ad(A)}\\
		-\tilde{\gamma}(\cdot)\Phi&-\slD_{A}&\rho(\cdot)\Phi\\
		-d_{\ad(A)}^*&\rho^*(\cdot\Phi^*)&0
		\end{array} } \right].\qedhere
	\]
\end{definition}

\begin{remark}The operator $\cL_{(A,\Phi)}$ is formally self-adjoint and elliptic. Furthermore, the deformation theory of $\sM^h_{SW}$ is controlled by the following elliptic deformation complex:
$$\Omega^0(M,\ad(\fs))\xrightarrow{G_{(A,\Phi)}}\Omega^1(M,\ad(\fs))\oplus \Gamma(\bS)\xrightarrow{d\operatorname{SW}_{|(A,\Phi)}}\Omega^1(M,\ad(\fs))\oplus \Gamma(\bS)\xrightarrow{G^*_{(A,\Phi)}}\Omega^0(M,\ad(\fs)).$$
If $(A ,\Phi)$ is an irreducible solution of the equation \autoref{eq sp(1) SW main} then by \cite[Proposition 3.6, Proposition 2.19]{Doan2017a} the moduli space $\sM^h_{SW}$ around $(A ,\Phi)$ is homeomorphic to the zero set of a smooth map 
\begin{equation*}
\ob:\ker \cL_{(A,\Phi)}\to \coker \cL_{(A,\Phi)}	.\qedhere
\end{equation*}
 \end{remark}
 \begin{definition}The \textbf{Zariski tangent space} of $\sM^h_{SW}$ at an irreducible solution $(A,\Phi)\in \sA(\fs)\times \Gamma(\bS)$ is 
 $$\frac{\ker d\operatorname{SW}_{|(A,\Phi)}}{\im G_{(A,\Phi)} }=\ker d\operatorname{SW}_{|(A,\Phi)}\cap \ker G^*_{(A,\Phi)} =\ker \cL_{(A,\Phi)}.$$
    An irreducible solution $(A,\Phi)\in \sA(\fs)\times \Gamma(\bS)$ of the equation \autoref{eq sp(1) SW main} is called \textbf{infinitesimally rigid} or  \textbf{unobstructed} if $\ker \cL_{(A,\Phi)}=\{0\}$.
\end{definition}
\begin{remark}
Since ${\mu^{-1}(0)}=0$ (see \autoref{prop moment map}), there is a constant $C>0$ such that
$\abs{\Phi}^2\leq C\abs{\mu(\Phi)}$.	 Then for any solution $(A ,\Phi)$ of the equation \autoref{eq sp(1) SW main} we have $\norm{\Phi}_{L^4}$ and $\norm{F_{\ad(A)}}_{L^2}$ are uniformly bounded. Uhlenbeck compactness and elliptic bootstraping \cite[proposition 4.5]{Lim2003} will imply that the moduli space $\sM^h_{SW}$ is compact. As the virtual dimension of the moduli space of irreducible solutions $\sM^{h,*}_{SW}\subset \sM^h_{SW}$ is zero, one might expect to define an topological invariant of $M$ by counting perturbed irreducible solutions and possibly with some correction terms (due to the presence of reducible solutions). This has been carried out by \citet{Lim2003} in the case when $M$ is an integral homology $3$-sphere. The main difficulty for rational homology $3$-spheres or general $3$-manifolds is the presence of more reducible strata, but the we hope that the work of \citet{Bai2020} will be helpful to resolve this issue.  \end{remark}

The following proposition will be useful later to decide if a solution of the equation \autoref{eq sp(1) SW main} is irreducible or not. 
\begin{prop}\label{prop lin square}Let $(A ,\Phi)$ be a solution of the equation \autoref{eq sp(1) SW main}. Then 
	\[\cL^2_{(A,\Phi)}=
	\left[ {\begin{array}{ccc}
		\Delta_{\ad(A)}+\tilde{\gamma}^*(\tilde{\gamma}(\cdot)\Phi\Phi^*)&0&0\\
		0&\slD_{A}^2+\tilde{\gamma}(\tilde{\gamma}^*(\cdot\Phi^*))\Phi+\rho(\rho^*(\cdot\Phi^*))\Phi&0\\
		0&0&\Delta_{\ad(A)}+\rho^*(\rho(\cdot)\Phi\Phi^*)
		\end{array} } \right].
	\]
\end{prop}
In the proof of this proposition we need the following identities.
\begin{lemma}[{\cite[Appendix B]{Doan2017a}}]\label{lem identity DW}
	Suppose $A\in\sA(\fs)$, $\xi\in \Omega^0(M,\ad(\fs))$ and $\phi,\psi\in\Gamma(\bS)$. Then
	 \begin{enumerate}[(i)]
  \item $[\xi,\mu(\phi,\psi)]=\mu(\phi,\rho(\xi)\psi)+\mu(\psi,\rho(\xi)\phi)$,
 \item $d^*_{\ad(A)}\mu(\phi,\psi)=*\mu(\slD_{A}\phi,\psi)+*\mu(\slD_{A}\psi,\phi)-\frac{1}{2}\rho^*((\nabla_A\phi)\psi^*+ (\nabla_A\psi)\phi^*)$.
  \end{enumerate}
	\end{lemma}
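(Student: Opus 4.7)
The plan for part (i) is to use the fact that $\mu\colon \H \to (\Im\H\tn\Im\H)^*$ is a hyperk\"ahler moment map for the $\Sp(1)$-action via $\rho$, hence is equivariant in the sense that $\mu(\rho(g)\Phi) = \Ad(g)\mu(\Phi)$ for $g\in\Sp(1)$. Differentiating at $g=1$ in the direction $\xi\in\Im\H$ and using $(d\mu)_\Phi\eta = 2\mu(\Phi,\eta)$ (the bilinearization of the quadratic map $\mu$) yields the quadratic identity
\[
[\xi,\mu(\Phi)] = 2\mu(\Phi,\rho(\xi)\Phi).
\]
The general bilinear form of (i) then follows by polarization: substitute $\Phi = \phi+\psi$, expand both sides using bilinearity of $\mu(\cdot,\cdot)$ and linearity of $\rho(\xi)$, and subtract the identities obtained by specializing to $\Phi=\phi$ and $\Phi=\psi$. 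This step is purely algebraic and routine.

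For part (ii), I would compute in a local orthonormal frame $\{e_1,e_2,e_3\}$ of $TM$ that is synchronous at the point of interest (so that $\nabla_{e_j}e_i=0$ there), using the standard formula $d^*_{\ad(A)}\omega = -\sum_i \iota_{e_i}\nabla^A_{e_i}\omega$ for a $2$-form $\omega$ valued in $\ad(\fs)$, combined with the Leibniz rule
\[
\nabla^A_{e_i}\mu(\phi,\psi) = \mu(\nabla^A_{e_i}\phi,\psi) + \mu(\phi,\nabla^A_{e_i}\psi).
\]
The core step is then to expand $\iota_v\mu(\phi,\psi)$ explicitly using the moment-map defining relation $\inner{(d\mu)_\Phi\eta}{v\otimes\xi} = \inner{\gamma(v)\rho(\xi)\Phi}{\eta}$. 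Since $\mu$ takes values in $(\Im\H)^*\tn(\Im\H)^*$ with the first factor identified (via Hodge duality in dimension three) with $\Lambda^2 T^*M$ and paired against Clifford multiplication $\gamma$, while the second is identified with $\ad(\fs)^*$ and paired against $\rho$, the contraction $\iota_{e_i}$ splits into a piece carrying $\gamma(e_i)$ and a piece carrying $\rho^*$. Summing the $\gamma(e_i)$-pieces over $i$ assembles $\sum_i \gamma(e_i)\nabla^A_{e_i}\phi$ into $\slD_A\phi$ (and similarly for $\psi$), producing the $*\mu(\slD_A\phi,\psi)+*\mu(\slD_A\psi,\phi)$ contributions, while the non-Clifford pieces assemble into $-\tfrac12\rho^*((\nabla_A\phi)\psi^* + (\nabla_A\psi)\phi^*)$.

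The main obstacle will be keeping track of the two identifications needed to view $\mu(\phi,\psi)$ simultaneously as an element of $(\Im\H\tn\Im\H)^*$ (natural for the moment-map computation) and of $\Lambda^2 T^*M\tn\ad(\fs)$ (natural for applying $d^*_{\ad(A)}$), as well as the appearance of the Hodge star that this mediation produces on the right-hand side. I would reduce the verification to a componentwise calculation in a local trivialization of $\fs$ using a Pauli-type basis of $\Im\H$, where the Clifford relations on $\gamma$ and $\rho$ and the interaction of $\iota_{e_i}$ with $*$ are completely explicit; once this bookkeeping is set up, the identity becomes a finite tensorial check, which is presumably why the proof is delegated to \cite[Appendix B]{Doan2017a}.
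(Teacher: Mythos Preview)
The paper does not supply its own proof of this lemma; it simply cites \cite[Appendix B]{Doan2017a}. Your outline is the standard argument one finds there: part (i) via equivariance of the moment map and polarization, part (ii) via the local formula $d^*_{\ad(A)}\omega=-\sum_i\iota_{e_i}\nabla^A_{e_i}\omega$ combined with the Leibniz rule for $\mu$ and the defining property $\inner{\mu(\phi,\psi)}{v\otimes\xi}=\tfrac12\inner{\gamma(v)\rho(\xi)\phi}{\psi}$ to split the contraction into a Clifford piece (assembling into $\slD_A$) and a $\rho^*$ piece. This is correct in spirit and matches the cited reference; the only caution is your claim $(d\mu)_\Phi\eta=2\mu(\Phi,\eta)$, which presumes $\mu(\cdot,\cdot)$ is symmetric---in fact $\mu(\Phi,\Psi)=\tfrac12\tilde\gamma^*(\Phi\Psi^*)$ need not equal $\mu(\Psi,\Phi)$, so one should write $(d\mu)_\Phi\eta=\mu(\Phi,\eta)+\mu(\eta,\Phi)$ and carry both terms through the polarization, but this does not affect the conclusion.
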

\begin{proof}[{Proof of \autoref{prop lin square}}]We have
	\[
	\cL^2_{(A,\Phi)}=
	\left[ {\begin{array}{ccc}
		*_3d_{\ad(A)}&-2*\mu(\Phi,\cdot)&-d_{\ad(A)}\\
		-\tilde{\gamma}(\cdot)\Phi&-\slD_{A}&\rho(\cdot)\Phi\\
		-d_{\ad(A)}^*&\rho^*(\cdot\Phi^*)&0
		\end{array} } \right]
	\left[ {\begin{array}{ccc}
		*_3d_{\ad(A)}&-2*\mu(\Phi,\cdot)&-d_{\ad(A)}\\
		-\tilde{\gamma}(\cdot)\Phi&-\slD_{A}&\rho(\cdot)\Phi\\
		-d_{\ad(A)}^*&\rho^*(\cdot\Phi^*)&0
		\end{array} } \right].
	\]
	Denote by $M_m^n$ the element of $\cL^2_{(A,\Phi)}$ which sits on the $m$-th row and $n$-th column. Since $\cL^2_{(A,\Phi)}$ is formally self adjoint we need to compute only the following to conclude the proposition. 
	\begin{align*}
 &M_1^1= d_{\ad(A)}^*d_{\ad(A)}+2*\mu(\Phi,\tilde{\gamma}(\cdot)\Phi)+d_{\ad(A)}d^*_{\ad(A)}=\Delta_{\ad(A)}+\tilde{\gamma}^*(\tilde{\gamma}(\cdot)\Phi\Phi^*),\\
 &M_2^2=\slD_{A}^2+2\tilde{\gamma}(*\mu(\Phi,\cdot))\Phi+\rho(\rho^*(\cdot\Phi^*))\Phi,\\
 &M_3^3= d_{\ad(A)}^*d_{\ad(A)}+\rho^*(\rho(\cdot)\Phi\Phi^*)=\Delta_{\ad(A)}+\rho^*(\rho(\cdot)\Phi\Phi^*),\\
&M_1^2=-2d_{\ad(A)}^*\mu(\Phi,\cdot)+2*\mu(\Phi,\slD_{A}\cdot)-d_{\ad(A)}\rho^*(\cdot\Phi^*)=-2*\mu(\slD_{A}\Phi,\cdot) (\text{by}\  \autoref{lem identity DW}	)=0,\\
&	M_1^3=-*[F_{\ad(A)},\cdot]-2*\mu(\Phi,\rho(\cdot)\Phi)=-*[\mu(\Phi),\cdot]-2*\mu(\Phi,\rho(\cdot)\Phi) =0  (\text{by}\ \autoref{lem identity DW}	),\\
&	M_2^3=\tilde{\gamma}(d_{\ad(A)}\cdot)-\slD_{A}\rho(\cdot)\Phi=-\rho(\cdot)\slD_{A}\Phi=0\ (\text{as}\ \slD_{A}\Phi=0).\qedhere
	\end{align*}
\end{proof}
%##############################################################
\section{Another description of the $\Sp(1)$-Seiberg-Witten equation}\label{section explicit description}
The isomorphism $\Spin^{h}(3)=\frac{\Sp(1)\times \Sp(1)}{\<-1\>}\iso \SO(4)$ can be written as follows:
$$[p_+,p_-]\mapsto \{\Phi\to p_+ \Phi \bar{p}_-\}, \ \ \ \text{where}\ p_\pm\in \Sp(1), \Phi\in \H.$$
With the isomorphism $\Im \H \cong \Lambda^2_{\pm}(\H)$ given by $v\mapsto 1\w v\pm*_3v$ we have the following commutative diagram
\[ \begin{tikzcd}
\frac {\Sp(1)\times \Sp(1)}{\{\pm1\}} \arrow{r}{(\pi_+,\pi_-)} \arrow[swap]{d}{\cong} & \SO(\Im\H)\times \SO(\Im \H) \arrow{d}{\cong} \\%
 \SO(4)\arrow{r}{2:1}&\SO(\Lambda^2_{+}\H)\times \SO(\Lambda^2_{-}\H)\end{tikzcd}
\]
where $\pi_\pm[p_+,p_-]= \{w \to p_\pm w\bar p_\pm\}$. Here the top and bottom maps are $2$-fold coverings, and left and right maps are isomorphisms. And $*_3$ is the Hodge-star operator in dimension $3$.

%%%%%%%%%%%%%%%%%%%%%%%%%%%%%%%%%%%%
Let $(M,g)$ be a closed oriented Riemannian $3$-manifold. Set $V := {\R}\oplus T^*M.$ The metric $g$ induces an inner product on the bundle $V$. We choose the \textbf{$\Spin^h$-structure} $\fs=\SO(V)$ with the isomorphism 
  	$\SO(V) \times_{\SO(4)} \SO(3)\cong \SO(TM)$ induced by the above $\pi_+$. Observe that, the \textbf{spinor bundle} is $$\bS= V={\R}\oplus T^*M.$$
   We define $\gamma_\pm: T^*M\to \Lambda^\pm V$ by
$\nu \mapsto 1\wedge \nu \pm *_3\nu.$ More explicitly, for $(f,\sigma)\in \R\oplus T^*M$ and $\nu\in T^*M$
	\begin{equation}\label{ eqn explicit Cliff multi}
	\gamma_\pm(\nu)(f,\sigma)
	= (-\inp{\nu}{\sigma},f\nu  \pm *_3(\nu\wedge\sigma)).
	\end{equation}
Note that, the \textbf{Clifford multiplication} map $\gamma$ is exactly $\gamma_+$, and the adjoint bundle $\ad(\fs)= T^*M$ and  
	 the map $\rho$ is  exactly $-\gamma_-$.

 The space of all spin connections $\sA(\fs)$ is exactly the space of all metric connections on $V$ which induces the Levi-Civita connection $\nabla_{LC}$ on $T^*M$ via $\pi_+$.

\begin{prop}A spin connection $A\in \sA(\fs)$ can be expressed as
\begin{equation}\label{eq explicit A}
	A=
	\begin{bmatrix}   
	d & {a}^*\\
	-a & \nabla_{LC} +*_3(a\wedge\cdot)
	\end{bmatrix}
	\end{equation}
 where $a \in \Omega^1(M,T^*M)$ and $a^* =\inp{a}{\cdot}$.  Moreover, the induced connection on $\ad(\fs)= T^*M$ is $$\ad(A)=\nabla_{LC} +2*_3(a\wedge\cdot).$$
\end{prop}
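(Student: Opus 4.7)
The plan is to identify a baseline spin connection and parametrize all others via the Lie-algebra representation $\rho$. The formula for $\ad(A)$ will then follow because $\rho$ is a Lie-algebra homomorphism and the bracket on $\Im \H$ is twice the cross product, which is what produces the factor of $2$.

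For the parametrization, I would take $A_0 := d \oplus \nabla_{LC}$ as reference and first verify it is a spin connection: its induced connection on $\Lambda^2_+ V$ corresponds to $\nabla_{LC}$ under the identification $\nu \mapsto 1 \wedge \nu + *_3\nu$, which is immediate because $\nabla_{LC}$ commutes with $*_3$ and annihilates the constant section $1$. For $a \in \Omega^1(M, T^*M)$, I would next check that $A := A_0 + \rho(a)$ has the stated matrix form by unwinding $\rho(\xi)\Phi = -\Phi\xi$ via quaternion multiplication on $\Phi = (f,\sigma) \in \R \oplus T^*M = \H$; this is essentially the identity $\rho = -\gamma_-$ already observed in this section. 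To show the parametrization is exhaustive, I would use the decomposition $\so(V) = \so(V)^+ \oplus \so(V)^-$ together with the fact that $\so(V)^-$ acts trivially on $\Lambda^2_+V$, so $A_0 + \alpha$ is a spin connection iff $\alpha \in \Omega^1(M, \so(V)^-)$. The exact sequence $1 \to \Sp(1) \to \Spin^{h}(3) \to \SO(3) \to 1$ places the image of $\rho$ inside $\so(V)^-$, and a dimension count then identifies $\rho : \Im \H \to \so(V)^-$ as an isomorphism.

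For the formula $\ad(A) = \nabla_{LC} + 2 *_3(a \wedge \cdot)$: since $A_0$ is the product connection, the induced $\ad(A_0)$ on $T^*M \cong \ad(\fs)$ is $\nabla_{LC}$, and the perturbation contributes the adjoint action of $\rho(a)$. Because $\rho$ is a Lie-algebra homomorphism, $[\rho(a), \rho(b)] = \rho([a,b]_{\Im\H})$ for any section $b$ of $\ad(\fs)$. In $\Im \H \cong \R^3$ we have $[\xi,\eta] = \xi\eta - \eta\xi = 2\xi \times \eta = 2 *_3(\xi \wedge \eta)$, and transferring through $\Im \H \cong T^*M$ yields the claimed formula.

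The main subtleties will be (i) confirming that $\im \rho = \so(V)^-$ rather than $\so(V)^+$, which relies on the exact sequence together with the choice of $\pi_+$ in the identification defining the $\Spin^{h}$-structure, and (ii) the factor of $2$, which reflects the double cover $\Sp(1) \to \SO(3)$ through $[\xi,\eta]_{\Im\H} = 2 \xi \times \eta$.
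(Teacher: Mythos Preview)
Your argument is correct and reaches the same conclusion, but the route is genuinely different from the paper's.

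The paper starts from an arbitrary metric connection on $V=\R\oplus T^*M$, written in block form with off-diagonal part $a$ and lower-right block an unspecified metric connection $\nabla_T$, and then computes directly that the induced connection on $\Lambda^2_\pm V\cong T^*M$ is $\nabla_\pm=\nabla_T\mp *_3(a\wedge\cdot)$ by evaluating $\nabla_A(\gamma_\pm(\nu))$. Imposing $\nabla_+=\nabla_{LC}$ then solves for $\nabla_T$, and $\ad(A)=\nabla_-$ drops out of the same calculation; the factor of $2$ appears simply as the difference $\nabla_- - \nabla_+ = 2*_3(a\wedge\cdot)$.

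Your approach instead fixes the basepoint $A_0=d\oplus\nabla_{LC}$, invokes the affine structure of $\sA(\fs)$ over $\Omega^1(M,\ad(\fs))$ (stated as a remark earlier in the paper), and identifies the action with $\rho=-\gamma_-$ to obtain the matrix form. The $\ad(A)$ formula then comes from functoriality of associated connections together with the Lie bracket $[\xi,\eta]_{\Im\H}=2*_3(\xi\wedge\eta)$. This is more conceptual: it explains the factor of $2$ as the derivative of the double cover $\Sp(1)\to\SO(3)$ rather than as the outcome of a wedge-product computation, and it makes transparent why the off-diagonal and $*_3(a\wedge\cdot)$ terms come packaged together (they are both pieces of a single $\rho(a)$). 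The paper's computation, on the other hand, is entirely self-contained and does not need to invoke the affine-space remark or the $\so(V)^+\oplus\so(V)^-$ splitting; it also yields $\nabla_+$ and $\nabla_-$ simultaneously, so that $\ad(A_0)=\nabla_{LC}$ does not require a separate check.
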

\begin{proof} 
A metric connection on $V = {\R}\oplus T^*M$ can always be expressed as
	\begin{equation*}
	A=
	\begin{bmatrix}   
	d & {a}^*\\
	-a & \nabla_T
	\end{bmatrix}
	\end{equation*}
where $\nabla_T$ is a metric connection on $T^*M$ and $a \in \Omega^1(M,T^*M)$ with $a^* :=\inp{a}{\cdot}$.
	This connection induces the connection $\nabla_\pm = \nabla_T  \mp *_3(a\wedge \cdot) $
	on $T^*M$ via $\pi_\pm$. Indeed, for $\nu\in \Gamma(T^*M)$,
 \[
	\nabla_A \big(\gamma_\pm(\nu)\big)
	=
	\nabla_A (1 \wedge \nu \pm *_3\nu) 
	=
	1\wedge(\nabla_{T}\nu \mp*_3(a\wedge\nu))
	\pm *_3(\nabla_T\nu \mp*(a\wedge\nu)) =
	\gamma_\pm(\nabla_{T}\nu \mp*_3(a\wedge\nu)).
	\]
Here, we have used the identities,
$\nabla_A (1 \wedge \nu)=-a \wedge \nu+ 1 \wedge \nabla_{T}\nu$, and 
	\[\nabla_A (*_3\nu)=1 \wedge \iota(a^* (*_3\nu) + *_3\nabla_T\nu 
	=-1 \wedge *_3(a\wedge\nu) + *_3\nabla_T\nu.
	\]
Thus $\nabla_{LC}=\nabla_+$ if and only if
	$\nabla_T = \nabla_{LC} +*_3(a\wedge\nu)$. And, $\ad(A)=\nabla_-$ if and only if
	$\nabla_T = \nabla_{LC} +2*_3(a\wedge\nu)$.
\end{proof}
\begin{definition}
    For $a\in T^*M\tn T^*M$ we write $a = a_{ij}e^i\otimes e^j$ in an oriented local orthonormal frame $ \{ e_1,e_2,e_3\}$ of $TM$ and we define
\begin{equation*}
	\tr(a) := \sum_{i=1}^3 \inp{a(e_i)}{e_i}, \quad
	\tau(a):= *_3\sum_{i=1}^3 e_i \wedge a(e_i)\in T^*M,
	\end{equation*}
 and \[S(a) := \sum_{i,j=1}^3 (a_{ij}+a_{ji}) e^i\otimes e^j\in \sym^2(T^*M).\qedhere\]
 \end{definition}
\begin{prop}For a spin connection $A\in \sA(\fs)$ with decomposition in \autoref{eq explicit A},
	the \textbf{Dirac operator} $\slD_{A}$ can be expressed as
	\begin{equation*}
	\slD_{A}(f,\sigma)
	=
	\begin{bmatrix}
	d^*\sigma - \inp{\tau(a)}{\sigma} + \tr(a)f \\
	df  + *_3d\sigma-f\tau(a) - \tr(a)\sigma +  \iota(\sigma)S(a)
	\end{bmatrix}.
	\end{equation*}
\end{prop}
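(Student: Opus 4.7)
The plan is to substitute the decomposition \eqref{eq explicit A} into $\slD_A(f,\sigma) = \sum_i \gamma_+(e^i)\nabla_{A,e_i}(f,\sigma)$ and simplify term by term using \eqref{ eqn explicit Cliff multi}. I would split $A = A_0 + \alpha$, where $A_0$ is the block-diagonal connection with $d$ on $\R$ and $\nabla_{LC}$ on $T^*M$, and $\alpha \in \Omega^1(M,\End(\bS))$ carries the $a$-dependence. Correspondingly $\slD_A = \slD_{A_0} + \cD_a$ with $\cD_a := \sum_i \gamma_+(e^i)\alpha(e_i)$, and I would handle the two pieces separately.

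For the diagonal piece, $\nabla_{A_0,e_i}(f,\sigma) = (e_if,\nabla_{LC,e_i}\sigma)$, so applying \eqref{ eqn explicit Cliff multi} and summing gives
\[
\slD_{A_0}(f,\sigma) = \Bigl(-\sum_i \langle e^i,\nabla_{LC,e_i}\sigma\rangle,\ \sum_i (e_if)e^i + \sum_i *_3(e^i\wedge\nabla_{LC,e_i}\sigma)\Bigr).
\]
Torsion-freeness of $\nabla_{LC}$ then yields the standard identities $df = \sum_i (e_if)e^i$, $d\sigma = \sum_i e^i\wedge\nabla_{LC,e_i}\sigma$, and $d^*\sigma = -\sum_i \iota(e_i)\nabla_{LC,e_i}\sigma$, collapsing this to $(d^*\sigma,\,df + *_3 d\sigma)$, which accounts for the three non-$a$ terms of the claim.

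For $\cD_a$, the connection 1-form acts as $\alpha(X)(f,\sigma) = (\langle a(X),\sigma\rangle,\,-fa(X) + *_3(a(X)\wedge\sigma))$. After applying \eqref{ eqn explicit Cliff multi} and summing, the scalar part of $\cD_a(f,\sigma)$ becomes $\tr(a)f - \sum_i \langle e^i,*_3(a(e_i)\wedge\sigma)\rangle$, which equals $\tr(a)f - \langle\tau(a),\sigma\rangle$ via the 3-dimensional duality $\langle\alpha,*_3\beta\rangle = *_3(\alpha\wedge\beta)$ together with $*_3\tau(a) = \sum_i e^i\wedge a(e_i)$. The 1-form part of $\cD_a(f,\sigma)$ is
\[
\sum_i\langle a(e_i),\sigma\rangle e^i\ -\ f\sum_i *_3(e^i\wedge a(e_i))\ +\ \sum_i *_3\bigl(e^i\wedge *_3(a(e_i)\wedge\sigma)\bigr),
\]
whose middle summand is $-f\tau(a)$ by definition. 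The main step will be identifying the remaining two sums with $-\tr(a)\sigma + \iota(\sigma)S(a)$: using the 3-dimensional identity $*_3(\nu\wedge *_3\omega) = -\iota(\nu^\sharp)\omega$ for $\nu\in\Omega^1$, $\omega\in\Omega^2$ together with Cartan's product rule $\iota_{e_i}(a(e_i)\wedge\sigma) = a(e_i)(e_i)\sigma - \sigma(e_i)a(e_i)$ rewrites the third sum as $-\tr(a)\sigma + \sum_i \sigma(e_i)a(e_i)$. Combining with the first sum and writing $a = a_{ij}e^i\otimes e^j$ in an oriented orthonormal frame, a direct check shows that $\sum_i \langle a(e_i),\sigma\rangle e^i + \sum_i \sigma(e_i)a(e_i)$ has $k$-th component $\sum_i (a_{ki}+a_{ik})\sigma_i$, which is exactly $(\iota(\sigma)S(a))_k$ by the definition of $S$. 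The only subtle points are getting the sign right in the 3-dimensional identity $*_3(\nu\wedge *_3\omega) = -\iota(\nu^\sharp)\omega$ and the combinatorics producing the symmetrization $S(a) = a + a^T$; everything else is routine algebraic bookkeeping.
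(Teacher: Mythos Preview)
Your proof is correct and follows essentially the same route as the paper: compute $\nabla_{A,e_i}(f,\sigma)$ from the block form of $A$, apply $\gamma_+(e^i)$ using \eqref{ eqn explicit Cliff multi}, and identify the resulting sums. The only cosmetic differences are that the paper does not separate off the $a=0$ part first, and it handles the term $\sum_i *_3\bigl(e^i\wedge *_3(a(e_i)\wedge\sigma)\bigr)$ by a direct $\epsilon$-symbol computation rather than via your coordinate-free identity $*_3(\nu\wedge *_3\omega)=-\iota_{\nu^\sharp}\omega$ together with Cartan's contraction formula.
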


\begin{proof}First, we see that
	\begin{align*}
	\nabla_{A,e_i}(f,\sigma)
	=
	\begin{bmatrix}
	\partial_i f+\iota(a(e_i)^*)\sigma \\
	- a(e_i)f + \nabla_{LC,e_i}\sigma  + *_3(a(e_i)\wedge\sigma)
	\end{bmatrix}.
	\end{align*}
Therefore
	\begin{align*}
	\slD_{A}(f,\sigma)
	&=\sum_{i=1}^3 \gamma(e_i)\nabla_{A,e_i}(f,\sigma) \\
	&=
	\sum_{i=1}^3 \gamma_+(e_i)
	\begin{bmatrix}
	\partial_i f + i(a(e_i)^*)\sigma \\
	-a(e_i)f + \nabla_{LC,e_i}\sigma  +  *_3(a(e_i)\wedge\sigma)
	\end{bmatrix} \\
	&=\renewcommand\arraystretch{2}
	\begin{bmatrix}
	d^*\sigma + \sum_{i=1}^3 (\inp{a(e_i)}{e_i}f  -\inp{*_3(a(e_i)\wedge\sigma)}{e_i} \\
	d f + *_3d\sigma + \sum_{i=1}^3 ( e^i \iota(a(e_i))\sigma -*_3(e^i\wedge a(e_i)) f +*_3(e^i\wedge *_3(a(e_i)\wedge\sigma)))
	\end{bmatrix} \\
	&=\renewcommand\arraystretch{2}
	\begin{bmatrix}
	d^*\sigma -\inp{\tau(a)}{\sigma} +\tr(a)f \\
	d f   + *_3d\sigma-f\tau(a) + \underbrace{\sum_{i=1}^3  (e^i \iota(a(e_i))\sigma +*_3(e^i\wedge *_3(a(e_i)\wedge\sigma)))}_{=:B(a,\sigma)}
	\end{bmatrix} .     
	\end{align*}
To see $B(a,\sigma)= -\tr(a)\sigma + i(\sigma)S(a)$ we do a direct computation.	
	\begin{align*}
	B(a,\sigma)&=\sum_{i,j}
	a_{ij}\sigma_je^i
	+\sum_{k}a_{ij}\sigma_k *_3(e^i\wedge*_3(e^j\wedge e^k)) \\
	&=
	\sum_{i,j}a_{ij}\sigma_je^i
	+\sum_{k,\ell,m}a_{ij}\sigma_k\epsilon_{jk\ell}\epsilon_{i\ell m} e^m \\
	&=
	\sum_{i,j}a_{ij}\sigma_je^i
	-\sum_{k,m} a_{ij}\sigma_k(\delta_{ji}\delta_{km}-\delta_{jm}\delta_{ki}) e^m =
	\sum_{i,j}a_{ij}\sigma_je^i
	-\tr(a) \sigma + \sum_{i,j}a_{ij}\sigma_ie^j. \qedhere
	\end{align*}
\end{proof}

\begin{prop}\label{prop moment map} The \textbf{moment map} $\mu:\R\oplus T^*M\to \Lambda^2(T^*M)\tn T^*M\cong T^*M\otimes T^*M$ can be expressed as
	
\begin{equation*}
	\mu(f,\sigma) = (f^2-\abs{\sigma}^2)g -2 *_3(f\sigma) +  \sigma\otimes\sigma.
	\end{equation*}
\end{prop}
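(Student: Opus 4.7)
The statement is an explicit local formula, so I would prove it by a direct pointwise computation from the definition of the hyperk\"ahler moment map combined with the explicit descriptions of $\gamma$ and $\rho$ on $\bS = \R\oplus T^*M$ derived in the preceding subsection. The main point is that, by $\mu(\Phi) = \tfrac12\tilde\gamma^*(\Phi\Phi^*)$, for any $v,\xi\in T^*M\cong \Im\H$,
\[\inp{\mu(\Phi)}{v\tn\xi} = \tfrac12\inp{\tilde\gamma(v\tn\xi)\Phi}{\Phi} = \tfrac12\inp{\gamma(v)\rho(\xi)\Phi}{\Phi} = -\tfrac12\inp{\gamma_+(v)\gamma_-(\xi)\Phi}{\Phi},\]
where the last equality uses $\gamma = \gamma_+$ and $\rho = -\gamma_-$. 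It therefore suffices to check that this bilinear expression in $(v,\xi)$ matches the pairing of the right-hand side of the claim against $v\tn\xi$.

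Substituting $\Phi = (f,\sigma)$ and applying the formula $\gamma_\pm(\nu)(f,\sigma) = (-\inp{\nu}{\sigma},\, f\nu \pm *_3(\nu\wedge\sigma))$ twice --- first with $\nu = \xi$ and the minus sign to obtain $\gamma_-(\xi)(f,\sigma)$, then with $\nu = v$ and the plus sign to obtain $\gamma_+(v)\gamma_-(\xi)(f,\sigma)$ --- and finally taking the inner product with $(f,\sigma)$ reduces everything to a mechanical expansion. The only simplifications required are the two three-dimensional identities
\[\inp{v}{*_3(\xi\wedge\sigma)} = \det(v,\xi,\sigma),\qquad *_3\bigl(v\wedge *_3(\xi\wedge\sigma)\bigr) = \inp{v}{\sigma}\xi - \inp{v}{\xi}\sigma,\]
both of which follow directly from the contraction formula $\epsilon_{ijk}\epsilon_{iab} = \delta_{ja}\delta_{kb} - \delta_{jb}\delta_{ka}$. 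Regrouping the resulting scalar into its natural components then yields three pieces: the term proportional to $\inp{v}{\xi}$ assembles the symmetric piece $(f^2 - |\sigma|^2)g$; the term proportional to $\det(v,\xi,\sigma)$ assembles the antisymmetric piece $-2*_3(f\sigma) \in \Lambda^2 T^*M \hookrightarrow T^*M\tn T^*M$; and the remaining term, proportional to $\inp{v}{\sigma}\inp{\xi}{\sigma}$, assembles $\sigma\tn\sigma$.

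The computation is purely mechanical; the only genuine obstacle is the careful bookkeeping of several identifications --- $\Im\H \cong T^*M$ via the spin representation, $\Lambda^2 T^*M \cong T^*M$ via $*_3$, and the inclusion $\Lambda^2 T^*M \hookrightarrow T^*M\tn T^*M$ as antisymmetric tensors --- which together fix all the numerical constants in the formula, in particular the coefficient of $*_3(f\sigma)$. As an immediate corollary one obtains the auxiliary statement $\mu^{-1}(0) = \{0\}$ invoked in the remark preceding the statement: vanishing of the antisymmetric part of $\mu(f,\sigma)$ forces $f\sigma = 0$, while $\tr_g \mu(f,\sigma) = 3f^2 - 2|\sigma|^2$ forces $3f^2 = 2|\sigma|^2$, and the two conditions together imply $(f,\sigma) = 0$.
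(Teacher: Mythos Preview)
Your proposal is correct and follows essentially the same route as the paper: both compute $\inp{\mu(f,\sigma)}{v\otimes\xi}$ directly from $\mu(\Phi)=\tfrac12\tilde\gamma^*(\Phi\Phi^*)$ together with the explicit formulas for $\gamma_\pm$. The one difference is that the paper, instead of composing $\gamma_+(v)\gamma_-(\xi)$ and then pairing with $\Phi$, first uses the skew-adjointness of $\gamma_+(v)$ to rewrite $-\inp{\Phi}{\gamma_+(v)\gamma_-(\xi)\Phi}=\inp{\gamma_+(v)\Phi}{\gamma_-(\xi)\Phi}$; this sidesteps your second identity $*_3(v\wedge *_3(\xi\wedge\sigma))=\inp{v}{\sigma}\xi-\inp{v}{\xi}\sigma$ and makes the bookkeeping marginally lighter, but the substance is the same.
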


\begin{proof}
	We have
	\begin{align*}
	2\inp{\mu(f,\sigma)}{\nu\otimes\xi}
	&=
	-\inp{(f,\sigma)}{\gamma_+(\nu)\gamma_-(\xi)(f,\sigma)} \\
	&=
	\inp{\gamma_+(\nu)(f,\sigma)}{\gamma_-(\xi)(f,\sigma)}\\
	&=
	\inp{(-\inp{\nu}{\sigma},
		f\nu  + *_3(\nu\wedge\sigma))}{(-\inp{\xi}{\sigma},
		f\xi - *_3(\xi\wedge\sigma))}\\
  &=2\inp{\sigma \otimes \sigma}{\nu\otimes \xi}
	+ f^2\inp{\nu}{\xi}
	- 2\inp{f*_3\sigma}{\nu\wedge\xi}
	- \abs{\sigma}^2\inp{\nu}{\xi}\\
 &=2\inp{\sigma \otimes \sigma}{\nu\otimes \xi}
	+ 2f^2\inp{g}{\nu\otimes\xi}
	- 4\inp{f*_3\sigma}{\nu\otimes\xi}
	- 2\abs{\sigma}^2\inp{g}{\nu\otimes\xi}.\qedhere
	\end{align*}
\end{proof}

On the adjoint bundle $\ad(\fs)= T^*M$ the Lie bracket, $[v,w]=2*_3v\w w$ for $v,w \in T^*M$. Therefore, $\ad(A)=\nabla_{LC} +2*_3(a\wedge\cdot)=\nabla_{LC}+[a,\cdot].$  Then the curvature $F_{\ad(A)}\in \Omega^2(M,T^*M) $ can be expressed as 
	$$F_{\ad(A)}= R_g+d_{LC}a+ \frac{1}{2}[a\wedge a]$$
where $R_g\in \Omega^2(M,\Lambda^2T^*M)\cong \Omega^2(M,T^*M)\cong \Omega^1(M,T^*M) $ is the Riemann curvature of $g$.

The \textbf{$\Sp(1)$-Seiberg-Witten equation} \autoref{eq sp(1) SW main} can be rephrased as follows:  for $a\in  \Omega^1(M,T^*M)$, $f\in \Omega^0(M,\R)$ and $ \sigma\in \Omega^1(M,\R)$
\begin{equation}\label{eq: explicit SW}
\begin{cases}
d^*\sigma - \inp{\tau(a)}{\sigma}+\tr(a)f=0\\
df +*_3d\sigma - f \tau(a)  -\tr(a)\sigma +\iota(\sigma)S(a)=0\\
R_g +*_3d_{LC}a+\frac{1}{2}*_3[ a\wedge a]= f^2g-\abs{\sigma}^2g-2*_3(f\sigma)+\sigma\tn \sigma.
\end{cases}
\end{equation}

The Seiberg-Witten map  (see \autoref{def sw map linearzi}) $\operatorname{SW}:\Omega^1(M,T^*M) \oplus  \Omega^0(M,\R)\oplus \Omega^1(M,\R)\to \Omega^1(M,T^*M) \oplus  \Omega^0(M,\R)\oplus \Omega^1(M,\R)$ can be expressed as
\begin{align*}
\operatorname{SW}(a,f,\sigma)=
(R_g +*_3d_{LC}a+ \frac{1}{2}*_3[ a\wedge a]- \mu(f,\sigma),-d^*\sigma +\inp{\tau(a)}{\sigma}-\tr(a)f,\\
-df   -*_3d\sigma +f \tau(a)+\tr(a)\sigma -i(\sigma)S(a)).
\end{align*} 
A direct computation yields the linearization of $\operatorname{SW}$ at $(a,f,\sigma)$, which is
	\[
	 {d\operatorname{SW}}_{(a,f,\sigma)}=
	\left[ {\begin{array}{ccc}
		*_3d_{LC}+*_3[a\w\cdot] &  -2fg+2*_3\sigma&  2\inp{\sigma}{\cdot}g+2f*_3-S(\sigma\tn \cdot) \\
		\inp{\tau(\cdot)}{\sigma}- \tr(\cdot)f & -\tr(a) & -d^*+\inp{\tau(a)}{\cdot} \\
		f\tau(\cdot)+\tr(\cdot)\sigma-i(\sigma)S(\cdot) & -d+\tau(a) & -*_3d+\tr(a)-i(\cdot)S(a)
		\end{array} } \right].
	\]

The linearization of the gauge group action at $(a,f,\sigma)\in \Omega^1(M,T^*M) \oplus  \Omega^0(M,\R)\oplus \Omega^1(M,\R)$ (see \autoref{def sw map linearzi}),
$G_{(a,f,\sigma)}:\Omega^0(M,T^*M)\longrightarrow \Omega^1(M,T^*M) \oplus  \Omega^0(M,\R)\oplus \Omega^1(M,\R)$ is given by
$$G_{(a,f,\sigma)}=(-d_{LC}-[a\w\cdot],\inp{\sigma}{\cdot},-f-*_3(\cdot\w \sigma)).$$

Finally, the \textbf{gauge and co-gauge fixed linearization} of the Seiberg-Witten map  $\operatorname{SW}$ at a solution $(a,f,\sigma)$ of \autoref{eq: explicit SW}, $\mathcal L_{(a,f,\sigma)}:\Omega^1(M,T^*M) \oplus  \Omega^0(M,\R)\oplus \Omega^1(M,\R))\oplus \Omega^0(M,T^*M)\to 
	\Omega^1(M,T^*M) \oplus  \Omega^0(M,\R)\oplus \Omega^1(M,\R))\oplus \Omega^0(M,T^*M)$
is
\[
	\mathcal L_{(a,f,\sigma)}=
	\left[ {\begin{array}{cc}
		{d\operatorname{SW}}_{(a,f,\sigma)} & G_{(a,f,\sigma)}  \\
		{ G}^*_{(a,f,\sigma)}& 0 
		\end{array} } \right]
  \]
  \[
 = \left[ {\begin{array}{cccc}
		*_3d_{LC}+*_3[a\w\cdot] &  -2fg+2*_3\sigma&  2\inp{\sigma}{\cdot}g+2f*_3-S(\sigma\tn \cdot)& -d_{LC}-[a\w\cdot] \\
		\inp{\tau(\cdot)}{\sigma}- \tr(\cdot)f & -\tr(a) & -d^*+\inp{\tau(a)}{\cdot}&\inp{\sigma}{\cdot} \\
		f\tau(\cdot)+\tr(\cdot)\sigma-i(\sigma)S(\cdot) & -d+\tau(a) & -*_3d+\tr(a)-i(\cdot)S(a)&-f-*_3(\cdot\w \sigma)\\
  -d^*_{LC}-2(*a\w\cdot)^*&\sigma&-f+*_3(\cdot\w \sigma)&0
		\end{array} } \right].
	\]

\section{$\Sp(1)$-Seiberg-Witten equation on hyperbolic $3$-manifold}
If we force $a = 0\in \Omega^1(M,T^*M)$ in the $\Sp(1)$ Seiberg--Witten equation \autoref{eq: explicit SW}  then it becomes
\begin{equation}\label{eq: explicit SW a=0}
\begin{cases}
d^*\sigma = 0 \\
d f + *_3d\sigma = 0,\\
R_g = (f^2-\abs{\sigma}^2)g - 2*_3(f\sigma) + \sigma\otimes\sigma.
\end{cases}
\end{equation}
\subsection{An irreducible solution: $(a,f,\sigma)=(0, \pm 1,0)$}\label{sub sec irred sol}
Suppose $(M,g)$ is an oriented closed \textbf{hyperbolic 3 manifold} of constant sectional curvature $-1$. Then $R_g=g\in \Omega^1(M,T^*M)$. This implies that
$(a,f,\sigma)=(0, \pm 1,0)\in \Omega^1(M,T^*M) \oplus  \Omega^0(M,\R)\oplus \Omega^1(M,\R)$ are two gauge equivalent irreducible solutions of the $\Sp(1)$ Seiberg--Witten equation \autoref{eq: explicit SW}. We will work below with one of them say, $(0, 1,0)$. We have the following proposition about the linearization map at this solution, which essentially says when this solution is infinitesimally rigid.
\begin{definition} A symmetric $(0,2)$-tensor $a\in \Omega^1(M,T^*M)$ is called \textbf{Codazzi tensor} if 
\[d_{LC}a=0\in \Omega^2(M,T^*M).\qedhere\]
    \end{definition}
\begin{prop}\label{prop linsquare at special sol} The square of the linearlization
    \[
    \mathcal L_{(0,1,0)}^2=\left[ {\begin{array}{cccc}
	\Delta_{LC}+2g \tr(\cdot)+2*_3\tau(\cdot)& 0 & 0&0\\
	0& \Delta+6 &0&0\\
	0&0 & \Delta+5&0\\
	0&0&0&\Delta_{LC}+1
	\end{array} } \right].
\]
Moreover, \begin{align*}
    \ker \mathcal L_{(0,1,0)}&\cong\{a\in \sym^2(T^*M): d_{LC}a=0, \ \tr(a)=0 \}=\{\text{Trace-free Codazzi tensors on $M$}\}.
\end{align*}
\end{prop}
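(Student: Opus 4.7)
The plan is to specialize the general formula from \autoref{prop lin square} to the solution $(A_0, \Phi_0)$ corresponding to $(a,f,\sigma)=(0,1,0)$, identify the four diagonal blocks of $\mathcal{L}^2_{(0,1,0)}$, and then determine the kernel block by block using Hodge theory.

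First I would invoke \autoref{prop lin square}, which says that $\mathcal{L}^2_{(A,\Phi)}$ is block diagonal in the three summands $\Omega^1(M,\ad(\fs))$, $\Gamma(\bS)$, $\Omega^0(M,\ad(\fs))$ at any solution. With $a=0$, one has $\ad(A_0)=\nabla_{LC}$, so the first and third blocks are Hodge Laplacians plus zeroth-order terms. Using the explicit quaternionic formula $\tilde\gamma(v\otimes\xi)\Phi=-v\Phi\xi$ and $\rho(\xi)\Phi=-\gamma_-(\xi)\Phi$, one evaluates at $\Phi_0=(1,0)$ that
\[\tilde\gamma^*(\tilde\gamma(h)\Phi_0\Phi_0^*)=2g\tr(h)+2*_3\tau(h)\qandq\rho^*(\rho(\xi)\Phi_0\Phi_0^*)=\xi,\]
giving the first and fourth diagonal blocks in the statement. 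For the middle block, I would further split $\bS=\R\oplus T^*M$; a direct check shows that $\nabla^*_{A_0}\nabla_{A_0}$ together with both perturbation endomorphisms $\tilde\gamma(\tilde\gamma^*(\cdot\Phi_0^*))\Phi_0$ and $\rho(\rho^*(\cdot\Phi_0^*))\Phi_0$ preserve this splitting. Combining this with the Lichnerowicz--Weitzenböck formula and the hyperbolic data $\mathrm{scal}_g=-6$, $F_{\ad(A_0)}=R_g=g$ then yields the two middle blocks $\Delta+6$ on $\Omega^0(M,\R)$ and $\Delta+5$ on $\Omega^1(M,\R)$.

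Given the block structure, the kernel description follows because $\mathcal{L}_{(0,1,0)}$ is formally self-adjoint, so $\ker \mathcal{L}_{(0,1,0)}=\ker\mathcal{L}^2_{(0,1,0)}$, which splits according to the four blocks. On the compact manifold $M$ the kernels of blocks $(2,2)$, $(3,3)$, and $(4,4)$ are trivial, because $\Delta+6$ and $\Delta+5$ are strictly positive, while $\Delta_{LC}+1\geq 1>0$ on $\Omega^0(M,T^*M)$ since the connection Laplacian is non-negative.

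The content therefore lies in the first block $P:=\Delta_{LC}+2g\tr(\cdot)+2*_3\tau(\cdot)$ on $\Gamma(T^*M\otimes T^*M)$. I would decompose $T^*M\otimes T^*M=\sym^2_0 T^*M\oplus \R g\oplus\Lambda^2 T^*M$ as parallel, $L^2$-orthogonal summands. A brief calculation shows $P$ preserves this splitting: on $\R g$ it acts on the scalar coefficient as $\Delta+6$; on $\Lambda^2 T^*M$ the trace term vanishes and $2*_3\tau$ reduces to a positive scalar multiple of the identity, yielding $\Delta_{LC}+c$ for a positive constant $c$; and on $\sym^2_0 T^*M$ both lower-order terms vanish, leaving $P=\Delta_{LC}$. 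The first two cases contribute no kernel by positivity. By Hodge theory on the compact $M$, $\Delta_{LC}h=0$ is equivalent to $d_{LC}h=0$ and $d^*_{LC}h=0$; for symmetric $h$ the differential Bianchi identity gives $d^*_{LC}h=-d\tr(h)$, so trace-freeness forces $d^*_{LC}h=0$ automatically. Hence the kernel on $\sym^2_0 T^*M$ is exactly the space of trace-free Codazzi tensors, as claimed. The main technical obstacle is the careful bookkeeping of normalizations in the four zeroth-order endomorphisms (in particular factors coming from $\tau\tau^*=2\cdot\mathrm{id}_{T^*M}$ and the $\Lambda^2 T^*M\cong T^*M$ identification via $*_3$); once these are pinned down, the remaining work is standard Hodge theory and the Codazzi--Bianchi identity.
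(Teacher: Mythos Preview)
Your plan is correct and arrives at the same place as the paper's proof, but the execution differs in two places worth noting.

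For the diagonal blocks, the paper simply writes down the explicit $4\times 4$ matrix $\mathcal L_{(0,1,0)}$ from \autoref{section explicit description} and squares it, invoking \autoref{prop lin square} only to know the off-diagonal entries vanish. Your route through the abstract $3\times 3$ block form of \autoref{prop lin square}, followed by Lichnerowicz--Weitzenb\"ock to split the spinor block into $\Omega^0\oplus\Omega^1$, works but is more delicate: you must convert between the Bochner Laplacian $\nabla_A^*\nabla_A$ and the Hodge Laplacian on $1$-forms (a $\mathrm{Ric}=-2g$ correction), and compute $\tilde\gamma(F_{\ad(A_0)})$ and the two zeroth-order endomorphisms on each summand separately. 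The paper's direct matrix computation avoids this bookkeeping entirely.

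For the kernel of the first block, your orthogonal decomposition $T^*M\otimes T^*M=\sym^2_0\oplus\R g\oplus\Lambda^2$ and the paper's method of applying $\tr$ and $\tau$ to the equation are the same argument in different clothing: $\tr$ and $\tau$ are exactly the projections onto $\R g$ and $\Lambda^2$. One small correction: the identity $d_{LC}^*h=-d\tr(h)$ that you use is not the ``differential Bianchi identity'' but rather the contracted Codazzi identity, and it only holds \emph{once} $d_{LC}h=0$ is already known (which it is, from $\Delta_{LC}h=0$). The paper sidesteps this by citing Petersen for the equivalence ``trace-free symmetric with $\Delta_{LC}h=0$'' $\Leftrightarrow$ ``trace-free Codazzi''; your direct argument for that equivalence is a fine alternative.
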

\begin{proof}Since $a=0,f=1,\sigma=0$, therefore from the description in \autoref{section explicit description} of the linearization we have
\[
\mathcal L^2_{(0,1,0)}=
\left[ {\begin{array}{cccc}
	*_3d_{LC} & - 2g &  2*_3& -d_{LC}\\
	-\tr(\cdot) & 0 & -d^*&0\\
	\tau(\cdot) & -d & -*_3d&-1\\
	-d^*_{LC}&0&-1&0
	\end{array} } \right]
 \left[ {\begin{array}{cccc}
	*_3d_{LC} & - 2g &  2*_3& -d_{LC}\\
	-\tr(\cdot) & 0 & -d^*&0\\
	\tau(\cdot) & -d & -*_3d&-1\\
	-d^*_{LC}&0&-1&0
	\end{array} } \right].
\]
By \autoref{prop lin square}, we obtain that all the off-diagonal terms of $\mathcal L^2_{(0,1,0)}$ are $0$. Therefore $\mathcal L_{(0,1,0)}^2$ is
 \[
\left[ {\begin{array}{cccc}
	d_{LC}^*d_{LC}+2g \tr(\cdot)+2*_3\tau(\cdot)+d_{LC}d_{LC}^*& 0 & 0&0\\
	0& 2\tr(g)+d^*d &0&0\\
	0&0 & 2\tau *_3+dd^*+d^*d+1&0\\
	0&0&0&d_{LC}^*d_{LC}+1
	\end{array} } \right].
\]
Since $\tau*_3\sigma=2\sigma$ for all $\sigma\in T^*M$ we obtain the required form of $\mathcal L_{(0,1,0)}^2$.

Thus $(a,f,\sigma,\xi)\in \ker \mathcal L_{(0,1,0)}=\ker \mathcal L_{(0,1,0)}^2$ if and only if 
\begin{equation}
  f=0,\quad \sigma=0,\quad \xi=0,\quad \Delta_{LC}a+2g \tr(a)+2*_3\tau(a)=0. 
\end{equation}
Since trace commutes with $\Delta_{LC}$ and $\tr(*_3\tau(a))=0$ therefore 
$\Delta \big(\tr (a)\big)+6 \tr (a)=0$. Hence $\tr(a)=0$.  Again, $\tau$ commutes with $\Delta_{LC}$ and $\tau(*_3\tau(a))=2\tau(a)$ and therefore 
$\Delta \big(\tau(a)\big)+4 \tau (a)=0$. Hence $\tau(a)=0$ as well; in particular, by definition, $a$ is a symmetric tensor. Thus $a$ is a trace free symmetric tensor which is harmonic i.e. $\Delta_{LC}a=0$. Since $\Delta_{LC}=d_{LC}^*d_{LC}+d_{LC}d^*_{LC}$,
$$0=\inp{\Delta_{LC}a}{a}_{L^2}=\norm{d_{LC}a}_{L^2}^2+\norm{d^*_{LC}a}_{L^2}^2.$$
This implies that $d_{LC}a=0$ and $d^*_{LC}a=0$.
Therefore, \begin{align*}
    \ker \mathcal L_{(0,1,0)}&\cong\{a\in \sym^2(T^*M): d_{LC}a=0,d^*_{LC}a=0,  \tr(a)=0 \}.
\end{align*}
In fact, here $d^*_{LC}a=0$ is redundant as it follows from other conditions. Indeed, for a symmetric $(0,2)$-tensor $a\in \Omega^1(M,T^*M)$,  
$$d^*_{LC}a=-\nabla_{LC}(\tr (a))+\tr_{2,3}(d_{LC}a).$$
cf.~\cite[Proof of Proposition 9.4.4]{Petersen2016}. 
\end{proof}

\begin{cor} The irreducible solution $\big(\nabla_{LC},(1,0)\big)$ of \autoref{eq: explicit SW} is infinitesimally rigid (or, unobstructed) if and only if the hyperbolic $3$-manifold $(M,g)$ does not admit any trace-free Codazzi tensors.
    \end{cor}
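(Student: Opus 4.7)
The corollary is essentially a restatement of the previous proposition, so my plan is to simply unwind the definitions.

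By definition, the irreducible solution $(\nabla_{LC},(1,0))$, which in the notation of \autoref{section explicit description} corresponds to $(a,f,\sigma) = (0,1,0)$, is infinitesimally rigid precisely when $\ker \mathcal L_{(0,1,0)} = \{0\}$. Thus the proof is just a matter of invoking the identification
\[
\ker \mathcal L_{(0,1,0)} \;\cong\; \{\text{Trace-free Codazzi tensors on } (M,g)\}
\]
established in \autoref{prop linsquare at special sol}, and observing that this kernel vanishes if and only if there are no nonzero trace-free Codazzi tensors. There is nothing further to check: the equivalence is literal.

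If one wants to be slightly more careful, one can note that by the self-adjointness of $\mathcal L_{(0,1,0)}$ (remarked after the definition of the operator $\cL_{(A,\Phi)}$), the kernel computed from $\mathcal L^2_{(0,1,0)}$ agrees with the kernel of $\mathcal L_{(0,1,0)}$ itself, so the identification in \autoref{prop linsquare at special sol} really is a description of the Zariski tangent space. Hence the corollary follows immediately, with no additional obstacle.
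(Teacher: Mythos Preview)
Your proposal is correct and matches the paper's approach exactly: the paper gives no separate proof for this corollary, treating it as an immediate consequence of \autoref{prop linsquare at special sol} and the definition of infinitesimal rigidity, which is precisely what you spell out.
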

   We need the following lemma from the literature which provides a sufficient condition for $\big(\nabla_{LC},(1,0)\big)$ to be infinitesimally rigid.
 \begin{lemma}[{\citet[Lemma 6]{Lafontaine1983}}]\label{thm Lafontaine}
	Let $(M,g)$ be a closed hyperbolic $3$-manifold ${\H}^3/\Gamma$ with $\Gamma$ being a co-compact  discrete subgroup of $\SO^+(1,3)$ and $g$ is the hyperbolic metric. Then there is an injection
 \[\{a\in \sym^2_0(M,g) :d_{LC} a=0\}=\{\text{Trace-free Codazzi tensors}\} \hookrightarrow H^1(\Gamma,\R^{1,3})\]
 where $H^1(\Gamma,\R^{1,3})$ is the first group cohomology of the $\Gamma$-module $\R^{1,3}$.
\end{lemma}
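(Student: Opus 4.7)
The plan is to obtain the injection as a consequence of the general deformation theory of $(G,X)$-structures, for $X=S^3$ and $G=\SO^+(1,4)\cong\mathrm{Conf}^+(S^3)$, combined with Calabi--Weil local rigidity of $\Gamma$.

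First I would reduce the problem to deformation theory of the holonomy. The identification already established in \autoref{thm hyperbolic sol},
\begin{equation*}
\{\text{Trace-free Codazzi tensors on $(M,g)$}\}\cong T_g \fM_{lcf},
\end{equation*}
turns the lemma into producing an injection $T_g\fM_{lcf}\hookrightarrow H^1(\Gamma,\R^{1,3})$. A locally conformally flat $3$-manifold is exactly a $(G,X)$-structure with $X=S^3$ and $G=\SO^+(1,4)$, and the hyperbolic metric $g$ provides such a structure whose holonomy is the tautological composition
\begin{equation*}
\rho\co\Gamma\hookrightarrow\SO^+(1,3)\hookrightarrow\SO^+(1,4),
\end{equation*}
where the second map is the stabiliser of a point in $\H^4$, so that $\H^3\subset\H^4$ is realised as a totally geodesic hypersurface. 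The Ehresmann--Thurston holonomy principle then gives an injection of $T_g\fM_{lcf}$ into the Zariski tangent space at $[\rho]$ of the character variety $\chi(\Gamma,\SO^+(1,4))$, which is $H^1(\Gamma,\mathfrak{so}(1,4))$.

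Next I would pass from $\mathfrak{so}(1,4)$ down to $\R^{1,3}$. The Cartan decomposition of the symmetric pair $(\SO^+(1,4),\SO^+(1,3))$ furnishes an $\SO^+(1,3)$-equivariant splitting
\begin{equation*}
\mathfrak{so}(1,4)=\mathfrak{so}(1,3)\oplus \mathfrak p,
\end{equation*}
where the isotropy representation of $\SO^+(1,3)$ on $\mathfrak p$ is the defining representation on $\R^{1,3}$. Pulling back along $\rho$ gives the corresponding $\Gamma$-equivariant splitting, and hence
\begin{equation*}
H^1(\Gamma,\mathfrak{so}(1,4))\cong H^1(\Gamma,\mathfrak{so}(1,3))\oplus H^1(\Gamma,\R^{1,3}).
\end{equation*}
The Calabi--Weil local rigidity theorem for closed hyperbolic manifolds of dimension at least three gives $H^1(\Gamma,\mathfrak{so}(1,3))=0$. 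Composing with the injection produced in the previous paragraph yields the desired embedding $T_g\fM_{lcf}\hookrightarrow H^1(\Gamma,\R^{1,3})$.

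The step I expect to be the main obstacle is the rigorous use of the Ehresmann--Thurston principle at the level of Zariski tangent spaces. One must model $\fM_{lcf}$ as the space of locally conformally flat metrics on $M$ modulo diffeomorphisms and conformal rescalings, then check that the derivative of the holonomy map at $g$ is injective on the tangent space and lands inside $H^1(\Gamma,\mathfrak{so}(1,4))$. Equivalently, one has to rule out a nontrivial infinitesimal l.c.f.\ deformation of $g$ whose holonomy variation is exact as a cocycle, which ultimately reduces to the absence of nontrivial infinitesimal conformal automorphisms of $(M,g)$ that descend from equivariant deformations of the developing map. The remaining ingredients, namely the module decomposition of $\mathfrak{so}(1,4)$ and Calabi--Weil rigidity, are standard.
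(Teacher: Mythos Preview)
The paper does not give its own proof of this lemma; it is quoted from \citet{Lafontaine1983} as input from the literature. So there is nothing to compare against on the paper's side, and your outline is in fact essentially Lafontaine's argument: identify flat conformal structures with $(\SO^+(1,4),S^3)$--structures, take the derivative of the holonomy map, and then use Weil rigidity to kill the $\mathfrak{so}(1,3)$--summand of $H^1(\Gamma,\mathfrak{so}(1,4))$.

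Two small points. First, your description of the embedding $\SO^+(1,3)\hookrightarrow\SO^+(1,4)$ is off: the stabiliser of a point of $\H^4$ is $\SO(4)$, not $\SO^+(1,3)$. What you want is the stabiliser of a spacelike vector in $\R^{1,4}$, equivalently of a totally geodesic $\H^3\subset\H^4$, equivalently of a round $S^2\subset S^3=\partial_\infty\H^4$. This does not affect the module decomposition $\mathfrak{so}(1,4)\cong\mathfrak{so}(1,3)\oplus\R^{1,3}$, which is correct. Second, routing through \autoref{thm hyperbolic sol} to obtain the identification of trace-free Codazzi tensors with $T_g\fM_{lcf}$ is a forward reference in the paper's logical order (the Lafontaine lemma is stated before the Beig lemma and before the proof of \autoref{thm hyperbolic sol}); it is cleaner, and closer to Lafontaine's original argument, to build the cocycle directly from a trace-free Codazzi tensor $a$ by viewing it as a closed $1$--form with values in the flat $\R^{1,3}$--bundle $\tilde M\times_\Gamma\R^{1,3}$, using that on a hyperbolic manifold this flat bundle is canonically identified with $\R\oplus TM$. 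That bypasses the Ehresmann--Thurston step you flag as delicate.
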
  
\subsection{Locally conformally flat structures and Codazzi tensors}
We review the basics of locally conformally flat structures and its relation with Codazzi tensors. For more detailed discussions we refer the reader to \cites{moroianu2015cotton,Beig1997,Goldschmidt1984}. An oriented closed Riemannian 3-manifold $(M,g)$ is called \textbf{locally conformally flat} if for each point $x\in M$ there exists a open neighbourhood $U_x$ of $x$ and $f\in C^\infty(U_x)$ such that $e^{2f}g$ is flat. The \textbf{Schouten tensor} $P_g\in \Omega^1(M,T^*M)$ and the \textbf{Cotton tensor} $C_g\in \Omega^1(M,T^*M)$ of $g$ are respectively 
$$P_g=\operatorname{Ric}_g-\frac{\operatorname{scal}_g}4 g, \quad C_g=*_3d_{LC}P_g.$$
It is a standard fact that the $3$-manifold $(M,g)$ is locally conformally flat if and only if the Cotton tensor $C_g=0$. There is a \textbf{Chern-Simons functional} $ {CS}:\fM:\to\R$ defined by
		$${CS}(g)=-\frac1{16\pi^2}\int_M\text{tr}(\omega\w d\omega+\frac23 \omega\w\omega\w\omega)$$
		where $\fM$ is the \textbf{space of Riemannian metrics} on $M$ and $\omega$ is the Levi-Civita connection $1$-form with respect to a global orthonormal frame on $(M,g)$. Furthermore, its linearization at $g$  is 
$$d{CS}_{|g}(h)=-\frac1{8\pi^2}\int_M\inp{h}{C_g}_g \operatorname{vol}_g.$$
In fact this implies that the Cotton tensor $C_g$ is symmetric, trace-free, divergence free and conformally covariant (in the sense $e^{-f} {C}_g= C_{e^{2f}g} \ \ \forall f\in C^\infty(M)$). We can consider the map $C:\fM\to \Omega^1(M,T^*M)$, $g\mapsto C_g$.
The \textbf{moduli space of locally conformally flat structures} is then $C^{-1}(0)/ {\Diff(M)\times C^\infty(M)}$. The deformation theory of this moduli space at a locally conformally flat structure $[g]$ is controlled by the following formally self-adjoint conformally invariant elliptic deformation complex: 
\begin{equation}\label{eq def comp of cotton}
    0\to \Omega^0(M,TM)\xrightarrow{ L^0} \sym^2_0(M,g)\xrightarrow{d{C}_{|g}} \sym^2_0(M,g)\xrightarrow{d_{LC} ^*}\Omega^0(M,TM)\to0
\end{equation}
where 
		$L^0(X)= \cL_Xg-\frac23 \operatorname{div}(X)g$ is the linearization of the action of $\Diff(M)$ and $\sym^2_0(M,g)$ is the space of all symmetric trace-free $(0,2)$-tensors on $(M,g)$. The cohomologies $H^0(C,g):=\operatorname{ker}( L^0)$ is the space of all conformal Killing vector fields, 
  $$H^1(C,g):=\frac{\ker(d{C}_{|g})}{\im(L^0)}=\ker(d{C}_{|g})\cap\operatorname{ker}(d_{LC} ^*)$$ is the \textbf{Zariski tangent space} of $\fM_{lcf}$, the moduli space of locally conformally flat structures at $[g]$. We say $g$ is \textbf{infinitesimally rigid} if $H^1(C,g)=0$. If $M$ is simply connected then $H^1(C,g)=0$.

To complete the proof of \autoref{thm hyperbolic sol} we need the following lemma again from the literature.
\begin{lemma}[{\citet[Section 4]{Beig1997}}]
	Let $(M,g)$ be a closed hyperbolic $3$-manifold ${\H}^3/\Gamma$ with $\Gamma$ being a co-compact  discrete subgroup of $\SO^+(1,3)$ and $g$ is the hyperbolic metric. Then $g$ is locally conformally flat and
	\[H^1(C,g)=\{a\in \sym^2_0(M,g) :d_{LC} a=0\}=\{\text{Trace-free Codazzi tensors}\}.\]
\end{lemma}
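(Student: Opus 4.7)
I would prove the lemma in two parts.

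For local conformal flatness: since $(M,g)$ has constant sectional curvature $-1$, one has $\Ric_g = -2g$ and $\operatorname{scal}_g = -6$, hence the Schouten tensor $P_g = \Ric_g - \tfrac{\operatorname{scal}_g}{4}g = -\tfrac{1}{2}g$ is parallel with respect to $\nabla_{LC}$. Therefore $d_{LC} P_g = 0$, the Cotton tensor $C_g = *_3 d_{LC} P_g$ vanishes, and by the criterion recalled earlier this means $(M,g)$ is locally conformally flat.

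For the identification $H^1(C,g) = \{a \in \sym^2_0(M,g) : d_{LC} a = 0\}$, I recall that $H^1(C,g) = \ker(dC_{|g}) \cap \ker(d_{LC}^*) \cap \sym^2_0$ and show both inclusions. The easy direction is $\supseteq$: if $a \in \sym^2_0$ satisfies $d_{LC} a = 0$, then contracting the Codazzi identity $\nabla_i a_{jk} = \nabla_j a_{ik}$ over $(i,k)$ yields $\operatorname{div}(a) = d\operatorname{tr}(a) = 0$, so $d_{LC}^* a = 0$. To check $dC_{|g}(a) = 0$, I would differentiate the relation $C = *_3 d_{LC} P$ in $g$, using the standard formulas for $\delta\Gamma$ and $\delta\Ric$. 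On the Einstein background with $P_g \propto g$, the terms of the form $(\delta\Gamma)\cdot P_g$ collapse, leaving an explicit second-order operator on $\sym^2_0$. A direct index computation in an orthonormal frame then shows this operator vanishes on trace-free Codazzi tensors: the principal part is (up to a constant) $*_3 d_{LC}$ applied to a multiple of $d_{LC} a$, and the zeroth-order curvature remainder cancels because of the trace-free hypothesis and the constant sectional curvature.

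For the reverse inclusion $\subseteq$: suppose $a \in \sym^2_0$ satisfies $d_{LC}^* a = 0$ and $dC_{|g}(a) = 0$. Using the explicit form of $dC_{|g}$ from the previous step, I would derive a Weitzenb\"ock-type identity whose integrated form reads, schematically,
\[
  0 = \int_M \inp{dC_{|g}(a)}{\ast d_{LC} a}\, \vol_g = \lVert d_{LC} a \rVert_{L^2}^2 + R(a),
\]
where the remainder $R(a)$ is a sum of zeroth-order and divergence terms that vanish identically under the hypotheses $d_{LC}^* a = 0$, $\operatorname{tr}(a) = 0$, and the constant-curvature identity $R_g = g$. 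This forces $d_{LC} a = 0$, completing the identification.

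The main obstacle is the explicit computation of $dC_{|g}$ on the hyperbolic background and the accompanying Weitzenb\"ock identity matching $d_{LC}^* d_{LC}$ on trace-free, divergence-free, symmetric tensors. The computation itself is routine but hinges on several cancellations that are specific to the Einstein, constant-curvature case; in practice one would invoke the detailed formulas from \cite{Beig1997} for the precise expressions, since the structure of that paper is devoted to exactly this calculation.
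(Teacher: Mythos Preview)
The paper does not supply its own proof of this lemma; it is simply quoted from \cite{Beig1997}, so there is nothing to compare against directly.

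Your argument for local conformal flatness is correct and complete. For the identification of $H^1(C,g)$ with trace-free Codazzi tensors, your outline is a reasonable strategy, but it is not a proof: both the claim that $dC_{|g}(a)=0$ for trace-free Codazzi $a$ and the reverse inclusion rest on an explicit formula for $dC_{|g}$ on the hyperbolic background, and you do not produce that formula---you only assert that certain cancellations occur and that a suitable Weitzenb\"ock identity exists, then point back to \cite{Beig1997}. In effect your proposal reduces to the same citation the paper already makes. If you want a self-contained argument, you must actually compute the linearisation of the Cotton tensor at a constant-curvature metric (on trace-free, divergence-free tensors it takes the form $dC_{|g}(a)=*_3 d_{LC}\big(\nabla_{LC}^*\nabla_{LC}a + c\,a\big)$ for an explicit constant $c$ depending on the curvature), and then verify the integration-by-parts identity you allude to. Until those steps are written out, the hard inclusion $\subseteq$ is only asserted, not proved.
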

\begin{cor}\label{cor zariski lcf} The Zariski tangent space of $\sM^h_{SW}$ at $\big(\nabla_{LC},(1,0)\big)$ is same as the Zariski tangent space of $\fM_{lcf}$ at $[g]$. In particular, $\big(\nabla_{LC},(1,0)\big)$ is infinitesimally rigid if and only if $g$ is infinitesimally rigid as a locally conformally flat structure.
    \end{cor}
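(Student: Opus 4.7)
The plan is to identify both Zariski tangent spaces with the common algebraic object $\{a \in \sym^2_0(M,g) : d_{LC}a = 0\}$ of trace-free Codazzi tensors, and then read off the equivalence of the rigidity statements.

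First I would unwind the Seiberg--Witten side. By the definition of the Zariski tangent space given just before this corollary, the tangent space of $\sM^h_{SW}$ at the irreducible solution $(\nabla_{LC},(1,0))$ is $\ker \cL_{(0,1,0)}$. \autoref{prop linsquare at special sol} has already diagonalized $\cL_{(0,1,0)}^2$ and computed
\[
\ker \cL_{(0,1,0)} \cong \{a \in \sym^2(T^*M) : d_{LC}a = 0,\ \tr(a) = 0\},
\]
which is exactly the space of trace-free Codazzi tensors on $(M,g)$.

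Next I would unwind the locally conformally flat side. By the definition given in \autoref{section explicit description} (just before this corollary), the Zariski tangent space of $\fM_{lcf}$ at $[g]$ is the middle cohomology $H^1(C,g)$ of the elliptic complex \autoref{eq def comp of cotton}. For a hyperbolic metric on ${\H}^3/\Gamma$, the lemma cited from Beig--Chruściel identifies $H^1(C,g)$ with the trace-free Codazzi tensors on $(M,g)$.

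Chaining these two identifications yields the desired isomorphism of Zariski tangent spaces. The last sentence is then immediate: $(\nabla_{LC},(1,0))$ is infinitesimally rigid as a Seiberg--Witten solution iff its Zariski tangent space vanishes, iff the Zariski tangent space $H^1(C,g)$ of $\fM_{lcf}$ at $g$ vanishes, iff $g$ is infinitesimally rigid as a locally conformally flat structure. There is no real obstacle here, since everything hard has been done in \autoref{prop linsquare at special sol} and in the Beig--Chruściel lemma; the only care required is to make sure the isomorphism is canonical (i.e.\ given by the identity map on trace-free Codazzi tensors), which is visible from the explicit descriptions.
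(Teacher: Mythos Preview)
Your proposal is correct and matches the paper's approach exactly: the corollary is stated in the paper without proof (as an immediate consequence), since it follows by combining \autoref{prop linsquare at special sol} with the Beig--Chru\'sciel lemma to identify both Zariski tangent spaces with the trace-free Codazzi tensors. The only minor slip is a labeling one: the definition of the Zariski tangent space of $\fM_{lcf}$ appears in the subsection on locally conformally flat structures, not in \autoref{section explicit description}.
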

\begin{proof}[{\large{\normalfont{\textbf{Proof of \autoref{thm hyperbolic sol}}}}}] The hyperbolic metric $g$ induces the irreducible solution $\big(\nabla_{LC},(1,0)\big)$ of \autoref{eq: explicit SW}, which has been shown in \autoref{sub sec irred sol}. Here $A_0=\nabla_{LC}$ is the Levi-Civita connection on $\R\oplus T^*M$ and $\Phi_0=(1,0)\in \Gamma(\R\oplus T^*M)$ is the spinor. The Zariski tangent space of $\sM^h_{SW}$ at $(A_0,\Phi_0)$ is same as the Zariski tangent space $H^1(C,g)$ of $\fM_{lcf}$ at $[g]$ is the \autoref{cor zariski lcf}. The Zariski tangent space of $\sM^h_{SW}$ at $(A_0,\Phi_0)$ is $\ker \cL_{(A_0,\Phi_0)}$ and \autoref{prop linsquare at special sol} proves that it is same as the space of trace-free Codazzi tensors. This completes the proof of the theorem.
\end{proof}
\begin{example}\label{eg rigid}A sufficient condition for $\big(\nabla_{LC},(1,0)\big)$ to be infinitesimally rigid is $H^1(\Gamma,\R^{1,3})=0$. There are examples in the literature (see \cite[Theorem 2]{Kapovich1994}, \cite[Theorem 1.1]{Francaviglia2008}, \cite[Section 5]{Scannell2002}) of infinitely many hyperbolic $3$-manifolds which are obtained by Dehn surgery on hyperbolic $2$-bridge knots or some generalizations and having $H^1(\Gamma,\R^{1,3})=0$. Moreover, in the Hodgson–Weeks census, out of the first $4500$ two generator hyperbolic $3$- manifolds $4439$ are having $H^1(\Gamma,\R^{1,3})=0$ (see \cite[Section 5]{Cooper2006}). 
    \end{example}

\section{$\Sp(1)$-Seiberg-Witten equation over circle times Riemann surface}
In this section we consider $\Sp(1)$-Seiberg-Witten equation over $M= S^1 \times \Sigma$, where $\Sigma$ is a closed Riemann Surface. Fix a Riemannian metric $g_{\Sigma}$ on $\Sigma$ and consider product metric on $S^1\times \Sigma$. Therefore $V={\R}\oplus{\R}\oplus T^*\Sigma$. To prove \autoref{thm SW Riemann surface} we are going to use the following standard lemma.
\begin{lemma}[{\citet[Theorem 3.8]{Doan19multi}}]\label{thm Doan} 
If the $\Sp(1)$-Seiberg--Witten equation \autoref{eq sp(1) SW main} over $S^1 \times \Sigma$ admits an irreducible solution, then every solution is gauge equivalent to a circle-invariant solution with the connection in temporal gauge; in particular, it is pulled back from $\Sigma$, i.e. a  circle invariant configuration  in the sense of \cite[Definition 3.3]{Doan19multi}.
\end{lemma}
  \begin{proof}[{\large{\normalfont{\textbf{Proof of \autoref{thm SW Riemann surface}}}}}] By \autoref{thm Doan}, any irreducible solution $(a,f,\sigma)$ of \autoref{eq: explicit SW} is gauge equivalent to a circle-invariant solution with the connection in temporal gauge. In other words, we may assume that
  \begin{itemize}
  \item $a=\beta\tn dt+\delta, \ \text{where}\ \beta\in \Omega^1(\Sigma,\R), \delta\in \Omega^1(\Sigma,T^*\Sigma)$,
  \item $f\in \Omega^0(\Sigma,\R)$, and $\sigma=\lambda dt+\omega$ where $\lambda\in \Omega^0(\Sigma,\R)$ and $\omega \in \Omega^1(\Sigma,\R)$.
  \end{itemize}
 We are going to use only the last equation of \autoref{eq: explicit SW}:
\begin{equation}\label{eq expli curvature moment}
  R_g +*_3d_{LC}a+\frac{1}{2}*_3[ a\wedge a]= f^2g-\abs{\sigma}^2g-2*_3(f\sigma)+\sigma\tn \sigma.  
\end{equation}
We introduce a notation where we write an element $B\in \Omega^1(S^1\times \Sigma,\R\oplus \R\oplus T^*\Sigma)$ with the decomposition is $B=B_{11}dt\tn dt+B_{12}\tn dt+dt\tn B_{21}+B_{22}$ as a matrix
\[B=
\begin{bmatrix}
   B_{11} &B_{12}\\ 
   B_{21} &B_{22} 
\end{bmatrix}.
\]
With this notation, a direct computation shows us the following:
\[(f^2-\abs{\sigma}^2)g=
\begin{bmatrix}
   f^2-\lambda^2-\abs{\omega}^2&0\\
   0& (f^2-\lambda^2-\abs{\omega}^2)g_\Sigma
\end{bmatrix},\quad 
-2*_3f\sigma=
\begin{bmatrix}
  0&-2f*_\Sigma \omega\\
   2f*_\Sigma \omega& -2f\lambda\vol_\Sigma
\end{bmatrix},
\]
\[\sigma\tn\sigma=
\begin{bmatrix}
  \lambda^2& \lambda \omega\\
   \lambda \omega& \omega\tn \omega
\end{bmatrix},\quad
R_g=
\begin{bmatrix}
 R_{g_\Sigma}& 0\\
   0&0
\end{bmatrix},\quad 
*_3d_{LC}a=
\begin{bmatrix}
 *_\Sigma d\beta& 0\\
   *_\Sigma d_{LC}\delta& 0
\end{bmatrix}
\]
and
\[\frac 12 *_3[a\w a]=
\begin{bmatrix}
 {\langle\delta \w\delta\rangle}& 0\\
   2*_\Sigma(\beta\w *_\Sigma\delta)&0
\end{bmatrix}.
\]
Thus \autoref{eq expli curvature moment} is equivalent to
\begin{equation}\label{eq2 expli curvature moment}
\begin{cases*}
     R_{g_\Sigma}+ *_\Sigma d\beta+ {\langle\delta \w\delta\rangle}= f^2-\abs{\omega}^2\\
     *_\Sigma d_{LC}\delta+2*_\Sigma(\beta\w *_\Sigma\delta)=\lambda\omega+2f*_\Sigma\omega\\
   \lambda\omega-2f*_\Sigma\omega=0\\  
    (f^2-\lambda^2-\abs{\omega}^2)g_\Sigma-2f\lambda\vol_\Sigma+\omega\tn \omega=0.
\end{cases*}
  \end{equation}
But the last equation of \autoref{eq2 expli curvature moment} implies that $$\omega\tn \omega-\frac 12 \abs{\omega}^2g_\Sigma=0,\quad f^2-\lambda^2-\frac 12\abs{\omega}^2=0, \quad f\lambda=0.$$
Thus $\omega=0$ and therefore $f=0$ and $\lambda=0$ as well. Hence the $\Sp(1)$-Seiberg-Witten equation \autoref{eq sp(1) SW main} over $S^1 \times \Sigma$ does not admit any irreducible solution. Hence the only solutions are reducible solutions which are spin connections $A\in \sA(\fs)$ satisfying $F_{\ad(A)}=0$, which are same as flat $\SU(2)$ connections over $M$. This completes the proof.
\end{proof} 

\begin{remark} In general, we can express the solutions of the $\Sp(1)$-Seiberg-Witten equation \autoref{eq sp(1) SW main} as solutions of a vortex equation corresponding to the $\SU(2)$-monopole equation discussed in \autoref{rmk complexification}. We can choose a $\Spin^h$-structure on $S^1 \times \Sigma$ such that the complexification of the spinor bundle is the pullback of $E\oplus (K^{-1}_\Sigma\tn E)$ for some $\U(2)$-bundle $E$ over $\Sigma$ with $\det E= K_\Sigma$ (see \cites[Proposition 4.1]{Okonek1996}[Theorem 44]{Echeverria2021}). One such choice can be $E=\C\oplus K_\Sigma$. Denote by $\sA_c(E)$ the space unitary connections on $E$ inducing the Chern connection on $\det E$.  Moreover, the solutions of the $\SU(2)$-monopole equation are gauge equivalent to either the solutions $(A,\psi_1)\in \sA_c(E)\times \Gamma(E)$ of the vortex equation
\begin{equation}\label{eq vortex 1}
\begin{cases*}
    \bar{\partial}_A\psi_1=0\\
    i*_\Sigma F^0_A+(\psi_1\psi_1^*)_0=0,
\end{cases*}
    \end{equation}
  or, the solutions $(A,\psi_2)\in \sA_c(E)\times \Gamma( K^{-1}_\Sigma\tn E)$ of the vortex equation
\begin{equation}\label{eq vortex 2}
\begin{cases*}
    \bar{\partial}^*_A\psi_2=0\\
    i F^0_A-(\psi_2\psi_2^*)_0=0.
\end{cases*}
    \end{equation}
By Serre duality, \autoref{eq vortex 2} can be identified with \autoref{eq vortex 1} with $E$ replaced by $K^{-1}_\Sigma\tn E$. To satisfy \autoref{eq sp(1) SW main}, $\psi_1$ and $\psi_2$ have to be real and in that case $\psi_1$ and $\psi_2$ are locally constants. Using \autoref{thm SW Riemann surface} we actually conclude that $\psi_1=0$ and $\psi_2=0$. 
\end{remark}
\printbibliography
\end{document}